\newcommand{\papertitle}{Pointwise Convergence of Dyadic Partial Sums of Almost Periodic Fourier Series}
\definecolor{linkcolor}{rgb}{0.2,0,0.5}
\theoremstyle{plain}
 \newtheorem{thm}{Theorem}[section] 
 \newtheorem*{thm*}{Theorem}
 \newtheorem{lemma}[thm]{Lemma}
 \newtheorem{prop}[thm]{Proposition}
 \newtheorem*{prop*}{Proposition}
 \newtheorem{defn}[thm]{\bf Definition} 
 \newtheorem*{defn*}{\bf Definition}
\dayofweekname{\THEDAY}{\THEMONTH}{\THEYEAR}  {\ordinal{DAY}} \monthname[\THEMONTH] \THEYEAR}
\renewcommand{\|}{\displaystyle}
\renewcommand{\(}{\left(}
\renewcommand{\)}{\right)}
\newcommand{\fhat}{\widehat{f}}
\renewcommand{\Gamma}{\varGamma}
\renewcommand{\epsilon}{\varepsilon}
\renewcommand{\leq}{\leqslant}
\renewcommand{\geq}{\geqslant}
\DeclareMathOperator{\supp}{supp}
\DeclareMathOperator{\sgn}{sgn}
\newcommand{\supcite}[2][]{%
  \ifthenelse{\isempty{#1}}%
    {$^{\textrm{\cite{#2}}}$}%
    {$^{\textrm{\cite[#1]{#2}}}$}%
}
\newcommand{\notlabel}[1]{} 
\DeclareMathOperator{\loc}{loc}
\begin{document}
\title[\papertitle]{\papertitle}
\author[]{Andrew D. Bailey}
\address{Andrew D. Bailey, School of Mathematics, Watson Building, University of Birmingham, Edgbaston, Birmingham, B15 2TT, United Kingdom.}
\email{baileya@maths.bham.ac.uk}
\nnfoottext{Last updated on {\today} at \currenttime.\\
2010 Mathematics Subject Classification: 42A75 (Primary), 42A24, 42B25 (Secondary).\\
The author was supported by an EPSRC doctoral training grant.  This work has appeared previously as part of the author's MPhil thesis, \cite{mphil}.  The author would like to express his gratitude to his MPhil and PhD supervisor, Jonathan Bennett, for all his support and assistance.}
\begin{abstract}
It is a classical result that dyadic partial sums of the Fourier series of functions $\| f \in L^p(\mathbb{T})$ converge almost everywhere for $p \in (1, \infty)$.  In 1968, E. A. Bredihina established an analogous result for functions belonging to the Stepanov space of almost periodic functions $S^2$ whose Fourier exponents satisfy a natural separation condition.  Here, the maximal operator corresponding to dyadic partial summation of almost periodic Fourier series is bounded on the Stepanov spaces $S^{2^k}$, $k \in \mathbb{N}$ for functions satisfying the same condition; Bredihina's result follows as a consequence.  In the process of establishing these bounds, some general results are obtained which will facilitate further work on operator bounds and convergence issues in Stepanov spaces.  These include a boundedness theorem for the Hilbert transform and a theorem of Littlewood--Paley type.  An improvement of ``$S^{2^k}$, $k \in \mathbb{N}$'' to ``$S^p$, $p \in (1, \infty)$'' is also seen to follow from a natural conjecture on the boundedness of the Hilbert transform.
\end{abstract}
\maketitle

\section{Introduction}
In 1924, forty two years before the appearance of Carleson's landmark paper, \cite{carleson}, establishing the pointwise convergence of Fourier series for functions in $L^2(\mathbb{T})$, Kolmogorov proved in \cite{kol} that for almost every $x \in \mathbb{T}$,
\begin{equation*}
f(x) = \lim_{k \rightarrow \infty} \sum_{|n| \leq 2^k} \fhat(n) e^{inx}
\end{equation*}
whenever $f$ is a function in $L^2(\mathbb{T})$, that is to say that dyadic partial sums of Fourier series for $L^2$ functions converge almost everywhere.  This result can be generalised to $f \in L^p(\mathbb{T})$ for $p \in (1, \infty)$ using Littlewood--Paley theory (see, for example, pp.\ 374-375 in \cite{grafakosclassical}).  In 1968, E. A. Bredihina proved in \cite{bred} the following generalisation of Kolmogorov's work to the context of almost periodic Fourier series:
\begin{thm}[E. A. Bredihina, 1968] \label{bredthm}
Let $f$ be an almost periodic function in the Stepanov space $S^2$ with Fourier exponents separated by some fixed constant $\alpha_f > 0$.  Then for almost every $x \in \mathbb{R}$,
\begin{equation*}
f(x) = \lim_{k \rightarrow \infty} \sum_{|\lambda_n| \leq 2^k} \fhat(\lambda_n) e^{i\lambda_n x}.
\end{equation*}
\end{thm}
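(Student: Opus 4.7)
The plan is to deduce Theorem~\ref{bredthm} from a maximal inequality for the operator $S^* f(x) := \sup_{k \in \mathbb{N}} |S_k f(x)|$, where $S_k f(x) := \sum_{|\lambda_n| \leq 2^k} \fhat(\lambda_n) e^{i\lambda_n x}$. Once $S^*$ is shown to be of strong (or weak) type on $S^2$ for functions whose Fourier exponents are $\alpha_f$-separated, pointwise almost everywhere convergence follows by the standard density/maximal inequality argument: the almost periodic trigonometric polynomials obeying the separation condition are dense in the relevant subspace of $S^2$, and for any such polynomial $S_k f$ is eventually equal to $f$. The maximal inequality then propagates this trivial convergence to the whole space.

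The strategy for the maximal inequality follows the classical $L^p(\mathbb{T})$ template, as flagged in the abstract. First, one writes each dyadic partial sum as a combination of modulated Hilbert transforms, schematically
\[
S_k f(x) = c_1 e^{i 2^k x} H(e^{-i 2^k \cdot} f)(x) + c_2 e^{-i 2^k x} H(e^{i 2^k \cdot} f)(x),
\]
up to boundary corrections, reducing control of each $S_k$ to that of the Hilbert transform $H$ on Stepanov spaces. Second, a Littlewood--Paley type theorem adapted to almost periodic Fourier series is required, giving a square function equivalence $\Vert f \Vert_{S^p} \sim \bigl\Vert ( \sum_k |\Delta_k f|^2 )^{1/2} \bigr\Vert_{S^p}$ for the dyadic blocks $\Delta_k f := \sum_{2^{k-1} < |\lambda_n| \leq 2^k} \fhat(\lambda_n) e^{i\lambda_n x}$. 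The separation hypothesis is essential here: it forces each dyadic block to contain only $O(2^k/\alpha_f)$ Fourier exponents, so the blocks behave like the well-spaced pieces of a classical Littlewood--Paley decomposition. Combining the two ingredients in the usual telescoping fashion then produces the bound on $S^*$. The restriction to the exponents $p = 2^k$ in the statement reflects the iteration of Cauchy--Schwarz type inequalities in the square function estimate, while a full $S^p$ ($1 < p < \infty$) range would follow from the conjectured Hilbert transform inequality at each such $p$.

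The principal obstacle I anticipate is the Hilbert transform step. The Stepanov norm is only locally $L^p$ and is not translation invariant in the clean fashion of $L^p(\mathbb{T})$, while the kernel $1/x$ has merely conditional cancellation and slow decay, so Calder\'on--Zygmund theory cannot be applied off-the-shelf. Splitting the singular integral into local and tail pieces compatibly with the almost periodic structure, and ensuring uniform control in the shift parameter implicit in the Stepanov norm, is where the bulk of the technical work will live; the Littlewood--Paley step is then comparatively routine once one has a good square function adapted to frequency blocks with a uniform lower gap.
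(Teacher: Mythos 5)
Your overall architecture agrees with the paper's: Theorem \ref{bredthm} is deduced from a maximal inequality (Theorem \ref{mainthm}) via the density/closedness argument of Theorem \ref{maxboundconv}, the dyadic sums $S_j$ are written as modulated (one-sided) Hilbert transforms, and a Stepanov Littlewood--Paley theorem controls the resulting square function. However, your plan for the step you yourself flag as the principal obstacle --- boundedness of the Hilbert transform on Stepanov spaces --- is one that provably cannot work. You propose to split the kernel $1/x$ into local and tail pieces and control each using the Stepanov norm. Any such argument uses only the local $L^p$ size of $f$, i.e.\ only the finiteness of $\Vert f\Vert_{S^p}$, and hence would prove boundedness of $H$ on the amalgam space $(L^p,\ell^\infty)$ of all locally $L^p$ functions with finite Stepanov norm. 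As recorded in Section \ref{secfurtherremarks}, this is false: by duality with $(L^{p'},\ell^1)$ and the computation $H(\chi_{[0,1]})(x)=\frac{1}{\pi}\log\frac{|x|}{|x-1|}$, the Hilbert transform is unbounded there. The paper's proof must therefore work on the frequency side: the $S^2$ case follows from Parseval together with the reverse Parseval inequality of Proposition \ref{bred}, whose proof rests on the discrete Hilbert--Schur inequality for $\alpha_f$-separated frequencies (Lemma \ref{hilmore}); the $S^{2^k}$ cases are then obtained by induction using the Cotlar-type identity $(H_{\pm}f)^2=f^2+2H_{\pm}(fH_{\pm}f)$ of Lemma \ref{niceformula}. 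This is also where the restriction to $p=2^k$ really comes from --- each inductive step doubles the exponent and no interpolation is available --- not from iterated Cauchy--Schwarz in the square function estimate as you suggest. One further detail: since $H_{\pm}$ must be applied to each Littlewood--Paley piece, a vector-valued bound (Theorem \ref{stephill2}) is needed, which the same induction delivers.

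Relatedly, you locate the use of the separation hypothesis in the wrong place. The paper's Littlewood--Paley theorem (Theorem \ref{StepLP}) uses smooth frequency cutoffs $f*\psi_k$ rather than sharp dyadic blocks, holds for \emph{all} $f\in S^p$, $p\in(1,\infty)$, with no separation assumption, and gives only the one-sided bound $\Vert(\sum_k|f*\psi_k|^2)^{1/2}\Vert_{S^p}\lesssim\Vert f\Vert_{S^p}$; it is deduced from the classical $L^p(\mathbb{R})$ theorem by decomposing the kernels over dyadic spatial annuli and exploiting Schwartz decay. The count of $O(2^k/\alpha_f)$ exponents per block plays no role. The separation condition enters solely through Proposition \ref{bred} and hence through the Hilbert transform bound. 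Your sharp-block square function equivalence is neither proved nor needed; if you intend to prove a two-sided equivalence for sharp cutoffs you would in any case need the Hilbert transform bound first, so the logical order of your argument would have to be reversed.
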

Here, for $p \in [1, \infty)$, the Stepanov spaces $S^p$ can be defined as the completion of the space of trigonometric polynomials on $\mathbb{R}$,
\begin{equation*}
\mathcal{P} \coloneq \Big\{ f = \sum_{n = -N}^N c_n e^{i \lambda_n \cdot} : (c_n) \subseteq \mathbb{C}, (\lambda_n) \subseteq \mathbb{R}, N \in \mathbb{N} \Big\}
\end{equation*}
with respect to the Stepanov norm, defined as
\begin{equation*}
\Vert f \Vert_{S^p} \coloneq \Big( \sup_{x \in \mathbb{R}} \int_x^{x+1} |f(s)|^p \, ds \Big)^{\frac{1}{p}}.
\end{equation*}
This space contains all $L^p$ periodic functions of any period.

A function $f \in S^p$ has a unique associated Fourier series,
\begin{equation*}
\sum_{n \in \mathbb{Z}} \fhat(\lambda_n) e^{i\lambda_n \cdot},
\end{equation*}
where the Fourier coefficients are defined for each $\lambda \in \mathbb{R}$ as
\begin{equation*}
\fhat(\lambda) \coloneq \lim_{T \rightarrow \infty} \frac{1}{2T} \int_{-T}^T f(x) e^{-i\lambda x} \, dx.
\end{equation*}
For any given almost periodic function $f$, this quantity is non-zero for only countably many choices of $\lambda$.  As a matter of convention, the sequence of Fourier exponents, $(\lambda_n)_{n \in \mathbb{Z}} \subseteq \mathbb{R}$, will be chosen here to be the unique strictly increasing sequence such that for each $n \in \mathbb{Z}$, $\lambda_{-n} = -\lambda_n$ and at least one of $\fhat(\lambda_n)$ and $\fhat(\lambda_{-n})$ is non-zero.  The function $f$ will be said to satisfy the separation condition if there exists $\alpha > 0$ such that $\lambda_{n+1} - \lambda_n > \alpha$ for all $n \in \mathbb{Z}$.  The infimum of all possible choices of such $\alpha$ will be denoted by $\alpha_f$.

The Stepanov spaces satisfy a nesting property in the sense that for $p_1$, $p_2 \in [1, \infty)$ with $p_1 \leq p_2$, it is the case that $S^{p_2} \subseteq S^{p_1}$ and that for any $f \in S^{p_1}$, $\| \Vert f \Vert_{S^{p_1}} \leq \Vert f \Vert_{S^{p_2}}$.  However, the reader is warned that for all $p \in [1, \infty)$, there exist functions $f \in L^{p}_{\loc}(\mathbb{R})$ with $\Vert f \Vert_{S^p} < \infty$ such that $f \notin S^p$.

For further information on almost periodic functions, the reader is referred to \cite{mphil}, \cite{levitan}, \cite{hier}, \cite{besi} and \cite{bohr}.

The main result of this paper is the following:
\begin{thm} \label{mainthm}
For each $j \in \mathbb{N}$, define the dyadic Fourier partial sum operator acting on almost periodic trigonometric polynomials $f$ as
\begin{equation*}
S_j f \coloneq \sum_{|\lambda_n| \leq 2^j} \fhat(\lambda_n) e^{i\lambda_n \cdot}
\end{equation*}
with corresponding maximal operator $\| S^{*}f \coloneq \sup_{j \in \mathbb{N}} |S_jf|$.  Then for each $k \in \mathbb{N}$, the operator $\| S^{*}$ extends to the class of all functions $f \in S^{2^k}$ that satisfy the separation condition and satisfies the bound
\begin{equation*}
\Vert S^{*}f \Vert_{S^{2^k}} \lesssim_{\alpha_f} \Vert f \Vert_{S^{2^k}}.
\end{equation*}
\end{thm}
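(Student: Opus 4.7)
By density of $\mathcal{P}$ in $S^{2^k}$, it suffices to prove the bound for trigonometric polynomials $f$ with $\alpha_f$-separated exponents; for such $f$ the spectrum is finite and $S^* f$ reduces to a finite supremum. I would then follow the classical Littlewood-Paley route, adapted to Stepanov spaces via the two tools announced in the abstract.

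Introduce Littlewood-Paley blocks $\Delta_0 f = S_0 f$ and $\Delta_l f = S_l f - S_{l-1} f$ for $l \geq 1$, so that $S_j f = \sum_{l \leq j} \Delta_l f$ and $\Delta_l f$ is supported on the $l$-th dyadic frequency shell. The sharp spectral projection onto $[-N, N]$ admits the classical identity
\[
S_{[-N, N]} g \;=\; \tfrac{1}{2i}\bigl( M_N H M_{-N} g - M_{-N} H M_N g \bigr),
\]
where $M_c g(x) = e^{icx} g(x)$ and $H$ is the Hilbert transform. Modulation shifts all Fourier exponents by a common real number and so preserves the separation condition; hence the abstract's $S^{2^k}$-boundedness of $H$ applies to $M_{\pm 2^j} f$ uniformly in $j$. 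A vector-valued upgrade via Khintchine's inequality with Rademacher functions then produces the Littlewood-Paley square-function estimate
\[
\Big\Vert \Big( \sum_{l \geq 0} |\Delta_l f|^2 \Big)^{1/2} \Big\Vert_{S^{2^k}} \;\lesssim_{\alpha_f}\; \Vert f \Vert_{S^{2^k}}.
\]

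To close the loop I would pass from this square function to the maximal operator. Since the $\Delta_l f$ have pairwise disjoint spectra, $(S_j f)_j$ behaves as a frequency martingale, and a Burkholder- or Doob-type argument -- equivalently, a $k$-fold iteration of Cauchy-Schwarz on each unit interval $[t, t+1]$ -- should yield
\[
\Vert S^* f \Vert_{S^{2^k}} \;\lesssim\; \Big\Vert \Big( \sum_l |\Delta_l f|^2 \Big)^{1/2} \Big\Vert_{S^{2^k}},
\]
which combined with the previous estimate completes the proof. I expect the main technical obstacle to lie in transferring this maximal--square-function inequality into the Stepanov norm -- a supremum of $L^{2^k}$-integrals over unit translates rather than a single $L^{2^k}$-norm -- so that the martingale argument must be carried out inside each translate with constants uniform in the translation parameter; the exponent $p = 2^k$ enters through the Hilbert transform bound currently available, and an extension to arbitrary $p \in (1, \infty)$ hinges on the conjectural improvement mentioned in the abstract.
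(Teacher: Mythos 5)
Your reduction to trigonometric polynomials, your use of the modulation identity expressing sharp spectral projections through the Hilbert transform, and your reliance on a vector-valued Hilbert transform bound together with a Littlewood--Paley theorem in $S^{2^k}$ are all in line with the paper's argument. The gap is in your final step, the passage from the square function of the sharp blocks $\Delta_l f$ to the maximal operator $S^*f$. The inequality $\Vert S^*f \Vert_{S^{2^k}} \lesssim \Vert (\sum_l |\Delta_l f|^2)^{1/2} \Vert_{S^{2^k}}$ is not available by the means you suggest: the projections $S_j$ are not conditional expectations with respect to any filtration, so Doob and Burkholder--Davis--Gundy do not apply, and Cauchy--Schwarz applied to $S_j f = \sum_{l \leq j} \Delta_l f$ yields only $|S_j f|^2 \leq (j+1)\sum_{l \leq j}|\Delta_l f|^2$, with an unbounded factor $(j+1)$ that no iteration over unit intervals removes. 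This is precisely the point where the classical proof, and the paper's, inserts an additional idea.

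The missing ingredient is a smoothed partial sum operator. Take $\phi \in \mathcal{S}(\mathbb{R})$ with $\widehat{\phi}$ supported in $[-1,1]$ and equal to $1$ on $[-\tfrac{1}{2},\tfrac{1}{2}]$, set $\phi_j = 2^j\phi(2^j\cdot)$ and $R_j f = \sum_n \fhat(\lambda_n)\widehat{\phi_j}(\lambda_n)e^{i\lambda_n\cdot}$. Then $S^*f \leq \sup_j |R_j f| + \big(\sum_j |S_j f - R_j f|^2\big)^{1/2}$. The smoothed maximal operator is \emph{not} controlled by any square function; it is bounded on $S^p$ by decomposing $f$ into unit-interval pieces and using the Schwartz decay of $\phi$ together with the Hardy--Littlewood maximal function. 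The square-function term, on the other hand, is genuinely amenable to your tools: since $\widehat{\phi_j}=1$ on $[-2^{j-1},2^{j-1}]$, one has $S_j f - R_j f = S_j(f*\psi_j)$ with $\widehat{\psi}=\widehat{\phi}(\tfrac{1}{2}\cdot)-\widehat{\phi}$, and then the modulation identity, the vector-valued bound for $H_{\pm}$ and the smooth Littlewood--Paley theorem give $\Vert(\sum_j |S_j(f*\psi_j)|^2)^{1/2}\Vert_{S^{2^k}} \lesssim \Vert(\sum_j|f*\psi_j|^2)^{1/2}\Vert_{S^{2^k}} \lesssim \Vert f\Vert_{S^{2^k}}$. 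Your observation that modulation preserves the separation condition is correct and is exactly what makes the vector-valued Hilbert transform estimate applicable here, but without the smooth/sharp splitting the argument does not close.
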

Here and throughout this paper, the symbol $\lesssim$ is used to signify that the left hand side is bounded above by a constant multiple of the right hand side, with this constant independent of $f$.  Subscripts indicate explicitly dependence of the constant on parameters.

It is seen in Section \ref{secauxresults} that Theorem \ref{mainthm} implies almost everywhere convergence of dyadic partial sums of Fourier series for functions in $S^{2^k}$ that satisfy the separation condition for each $k \in \mathbb{N}$ which, by the nesting of Stepanov spaces described above, is logically equivalent to Theorem \ref{bredthm} of Bredihina.  Section \ref{secauxresults} also contains other technical results required for the remainder of this paper.  In Section \ref{secht}, the Hilbert transform for functions in the Stepanov spaces is considered and a boundedness theorem is proved, whilst in Section \ref{seclp}, a theorem of Littlewood--Paley type is established for the Stepanov spaces.  Both of these theorems are natural analogues of fundamental theorems from standard $L^p(\mathbb{R})$ theory and will be of interest beyond the context of Theorem \ref{mainthm}.  With these results established, the proof of Theorem \ref{mainthm} is provided in Section \ref{secmainproof}.  To extend Theorem \ref{mainthm} to $S^p$ for all $p \in (1, \infty)$, it remains only to improve the result on the boundedness of the Hilbert transform from Section \ref{secht}; this is discussed in Section \ref{secfurtherremarks}.
\section{Auxiliary Results} \label{secauxresults}
For $p$, $q \in [1, \infty)$, there is a natural notion of weak $p$--$q$ boundedness of an operator $T$ with respect to Stepanov norms, namely that
\begin{equation*}
\sup_{x \in \mathbb{R}}  | \{s \in [x, x+1] : | Tf(s) | > \lambda \} |  \lesssim \(\frac{\Vert f \Vert_{S^p}}{\lambda}\)^q 
\end{equation*}
for all $f \in S^p$ and $\| \lambda > 0$.  By Chebyshev's inequality, it is easy to see that this is a genuinely weaker bound than $\| \Vert Tf \Vert_{S^q} \lesssim \Vert f \Vert_{S^p}$.  Weak boundedness of maximal operators in Stepanov spaces can be used to show pointwise convergence results, as is the case in the familiar $L^p$ setting.  Indeed, the following result holds:
\begin{thm} \label{maxboundconv}
Let $\| (T_j)_{j \in \mathbb{N}}$ be a family of linear operators on $\| S^p$, $\| p \in [1, \infty)$ and assume that $\| T^*f \coloneq \sup_{j \in \mathbb{N}} |T_jf|$ is weakly bounded $p$--$q$ for some $\| q \in [1, \infty)$.  Then the set
\begin{equation*}
E \coloneq \{ f \in S^p: \lim_{j \rightarrow \infty} T_j f(x) = f(x) \mbox{ \emph{a.e.}}  \}
\end{equation*}
is closed in $\| S^p$.
\end{thm}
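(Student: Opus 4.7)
The plan is to run the standard maximal-inequality-implies-closed-set-of-convergence argument, adapted to the Stepanov setting where everything is measured on an arbitrary unit interval.

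Let $(f_n) \subseteq E$ converge to $f$ in $S^p$; the goal is to show $f \in E$. Define the oscillation
\begin{equation*}
\Omega g(x) \coloneq \limsup_{j \rightarrow \infty} |T_j g(x) - g(x)|,
\end{equation*}
so that $g \in E$ exactly when $\Omega g = 0$ almost everywhere. Linearity of each $T_j$ gives
\begin{equation*}
|T_j f(x) - f(x)| \leq |T_j(f - f_n)(x)| + |T_j f_n(x) - f_n(x)| + |f_n(x) - f(x)|
\end{equation*}
for each $j$, $n$, and $x$; taking $\limsup_{j \rightarrow \infty}$ and using $\Omega f_n = 0$ a.e.\ yields the pointwise almost-everywhere bound $\Omega f(x) \leq T^*(f-f_n)(x) + |f - f_n|(x)$.

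Next, fix $\lambda > 0$ and $x_0 \in \mathbb{R}$. Then
\begin{align*}
\big| \{s \in [x_0, x_0+1] : \Omega f(s) > \lambda \} \big| &\leq \big| \{s \in [x_0, x_0+1] : T^*(f-f_n)(s) > \lambda/2 \} \big| \\
&\quad + \big| \{s \in [x_0, x_0+1] : |f - f_n|(s) > \lambda/2 \} \big|.
\end{align*}
The first term is controlled by the hypothesised weak $p$--$q$ bound on $T^*$, giving an upper bound of a constant multiple of $(\Vert f - f_n \Vert_{S^p}/\lambda)^q$. The second term is controlled by Chebyshev's inequality applied on the interval $[x_0, x_0+1]$, giving $(2 \Vert f - f_n \Vert_{S^p}/\lambda)^p$. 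Both bounds are independent of $x_0$, and both tend to $0$ as $n \rightarrow \infty$ because $f_n \rightarrow f$ in $S^p$. Hence $\big| \{s \in [x_0, x_0+1] : \Omega f(s) > \lambda \}\big| = 0$ for every $\lambda > 0$ and every $x_0 \in \mathbb{R}$, so $\Omega f = 0$ almost everywhere, i.e.\ $f \in E$.

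There is no real obstacle here — the argument is essentially the classical one from $L^p(\mathbb{R})$ theory. The only point requiring a little care is making sure the weak $p$--$q$ hypothesis, which is stated as a supremum over unit intervals, combines cleanly with Chebyshev's inequality on the same family of intervals so that the final estimate is uniform in $x_0$; this is automatic from the way the Stepanov weak-type condition has been formulated above.
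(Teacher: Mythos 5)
Your argument is correct and is essentially identical to the paper's own proof: the same three-term splitting via linearity, the same level-set decomposition controlled by the weak $p$--$q$ bound on $T^*(f-f_n)$ and Chebyshev's inequality on $[x_0, x_0+1]$, with uniformity in $x_0$ giving $\Omega f = 0$ almost everywhere. No issues.
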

This can be proved by a straightforward adaptation of standard arguments such as those in the proof of Theorem 2.2 in \cite{duo}.  This involves choosing a sequence $(f_n)_{n \in \mathbb{N}} \subseteq E$ such that $\Vert f_n - f \Vert_{S^p} \rightarrow 0$ as $n \rightarrow \infty$ for some $f \in S^p$.  Then for each $x \in \mathbb{R}$ and $\lambda > 0$, it can be shown that
\begin{eqnarray*}
&& |\{ s \in [x, x + 1] : \limsup_{j \rightarrow \infty} |T_jf(s) - f(s)| > \lambda \} |\\*
&\leq& |\{s \in [x, x+1] : T^{*}(f-f_n)(s) > \tfrac{\lambda}{2}\}| + \int_x^{x+1} \frac{2^p|(f-f_n)(s)|^p}{\lambda^p} \, ds
\end{eqnarray*}
from which it can be deduced that $f \in E$ as a consequence of weak boundedness of $T^{*}$.  As in the $L^p$ setting, this theorem is useful as there is a natural dense subspace of $S^p$, namely $\mathcal{P}$, in which reasonable pointwise convergence results usually either hold trivially or are easily verified.  In the context of the present paper, for example, it is certainly trivial that the Fourier series associated to trigonometric polynomials converge pointwise in every reasonable sense.  Since the proof of Theorem \ref{maxboundconv} naturally adapts to subspaces of $S^p$ equipped with the subspace topology, by the fact that $S^{2^k} \subseteq S^2$ for all $k \in \mathbb{N}$, Theorem \ref{bredthm} follows as a consequence of Theorem \ref{mainthm}.

There is a natural analogue of Parseval's Identity for almost periodic functions\footnote{This theorem is most natural on the Besicovitch space of almost periodic functions $B^2$ since the expression $\| \lim_{T \rightarrow \infty} \Big( \frac{1}{2T} \int_{-T}^T |f(x)|^2 \, dx \Big)^{\frac{1}{2}}$ corresponds to $\| \Vert f \Vert_{B^2}$.  The Besicovitch spaces strictly contain the Stepanov spaces.}:
\begin{thm}[Parseval's Identity] \label{parseval}
For any $\| f \in S^2$,
\begin{equation*}
\lim_{T \rightarrow \infty} \frac{1}{2T}\int_{-T}^T |f(x)|^2 \, dx = \sum_{n \in \mathbb{N}} |\fhat(\lambda_n)|^2.
\end{equation*}
\end{thm}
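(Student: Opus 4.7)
The strategy is to verify Parseval's identity on trigonometric polynomials by a direct calculation and to extend it to all of $S^2$ by a density argument based on orthogonal projection with respect to the semi-inner product $\langle g, h \rangle := \lim_{T \to \infty} \frac{1}{2T}\int_{-T}^T g(x) \overline{h(x)}\, dx$. Writing $M(g) := \langle g, g \rangle$ for the associated quadratic form, the first task is to show that this limit exists for every $f \in S^2$. The key elementary estimate is
\begin{equation*}
\int_{-T}^T |g(x)|^2 \, dx \leq 2\lceil T \rceil \Vert g \Vert_{S^2}^2,
\end{equation*}
obtained by covering $[-T, T]$ with $2\lceil T \rceil$ unit subintervals. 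For $f \in \mathcal{P}$, expanding $|f|^2$ and applying the formula $\frac{1}{2T}\int_{-T}^T e^{i\mu x}\, dx = \frac{\sin(\mu T)}{\mu T}$ gives $M(|f|^2) = \sum_n |\fhat(\lambda_n)|^2$ directly. For general $f \in S^2$, approximating by $f_k \in \mathcal{P}$ with $\Vert f - f_k \Vert_{S^2} \to 0$ and using the reverse triangle inequality for the $L^2([-T,T])$ norm gives
\begin{equation*}
\Big| \big(\tfrac{1}{2T}\textstyle\int_{-T}^T |f|^2\big)^{1/2} - \big(\tfrac{1}{2T}\textstyle\int_{-T}^T |f_k|^2\big)^{1/2} \Big| \leq \sqrt{\tfrac{\lceil T \rceil}{T}} \, \Vert f - f_k \Vert_{S^2}
\end{equation*}
uniformly in $T \geq 1$, so the $\limsup$ and $\liminf$ of the left-hand quantity differ by at most $O(\Vert f - f_k \Vert_{S^2})$; this forces $M(|f|^2)$ to exist and to satisfy $M(|f|^2) \leq \Vert f \Vert_{S^2}^2$.

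Next, I would exploit the orthonormality $\langle e^{i\mu_j \cdot}, e^{i\mu_k \cdot} \rangle = \delta_{jk}$, which is immediate from the mean-value formula above. For any finite collection $\{\mu_1, \ldots, \mu_N\}$ of distinct reals, a routine expansion shows that the projection $Qf := \sum_{j=1}^N \fhat(\mu_j) e^{i\mu_j \cdot}$ is the unique minimizer of $M(|f - g|^2)$ over $g$ in the span of $\{e^{i\mu_j \cdot}\}$, and that
\begin{equation*}
M(|f - Qf|^2) = M(|f|^2) - \sum_{j=1}^N |\fhat(\mu_j)|^2 \geq 0.
\end{equation*}
Applied over all finite subsets of the Fourier exponents of $f$, this gives Bessel's inequality $\sum_n |\fhat(\lambda_n)|^2 \leq M(|f|^2)$. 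For the reverse inequality, fix $\epsilon > 0$, pick $f_\epsilon \in \mathcal{P}$ with $\Vert f - f_\epsilon \Vert_{S^2} < \epsilon$, and let $\{\mu_j\}_{j=1}^N$ denote its Fourier exponents. The minimizing property applied with $g = f_\epsilon$ yields
\begin{equation*}
M(|f|^2) - \sum_{j=1}^N |\fhat(\mu_j)|^2 = M(|f - Qf|^2) \leq M(|f - f_\epsilon|^2) \leq \Vert f - f_\epsilon \Vert_{S^2}^2 < \epsilon^2.
\end{equation*}
Since $\fhat(\mu) = 0$ whenever $\mu \notin \{\lambda_n\}$, one has $\sum_{j=1}^N |\fhat(\mu_j)|^2 \leq \sum_n |\fhat(\lambda_n)|^2$, and letting $\epsilon \to 0$ produces the matching bound $M(|f|^2) \leq \sum_n |\fhat(\lambda_n)|^2$.

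The main obstacle, I expect, is the mismatch between the Fourier exponents of $f$ and those of a generic polynomial approximant $f_\epsilon$: the set $\{\mu_j\}$ can be entirely disjoint from $\{\lambda_n\}$, so one cannot simply project $f$ onto the span of a truncation of its own Fourier series. The resolution is to project onto the span of $\{e^{i\mu_j \cdot}\}$ and to translate the resulting sharp estimate via the vanishing of $\fhat$ off $\{\lambda_n\}$. A secondary technical point is the existence argument for $M(|f|^2)$ on $S^2$, which is not automatic from the definition of the Stepanov completion and requires the Cauchy-type transport from $\mathcal{P}$ indicated above.
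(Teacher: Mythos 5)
Your proof is correct. The paper offers no proof of its own for this theorem --- it refers the reader to Besicovitch, p.\ 109 --- and your argument (uniform-in-$T$ transport of the quadratic mean from $\mathcal{P}$ to $S^2$ via the estimate $\int_{-T}^T |g|^2 \leq 2\lceil T\rceil \Vert g\Vert_{S^2}^2$, Bessel's inequality from the orthogonal projection onto finite spans of characters, and the reverse inequality from the minimizing property applied to a polynomial approximant whose exponents need not coincide with those of $f$) is essentially the classical one found there. The only step left implicit is the existence of $\fhat(\mu)$ for general $f \in S^2$ and $\mu \in \mathbb{R}$, which the paper assumes as background and which in any case follows from the same Cauchy-type argument you use for the mean of $|f|^2$.
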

For a proof of this result, the reader is referred to \cite{besi}, p. 109.  It is easy to see that for any $f \in S^2$,
\begin{equation*}
\Big(\lim_{T \rightarrow \infty} \frac{1}{2T}\int_{-T}^T |f(x)|^2 \, dx \Big)^{\frac{1}{2}} \leq \Vert f \Vert_{S^2}
\end{equation*}
so a Parseval-type inequality,
\begin{equation*}
\Big(\sum_{n \in \mathbb{N}} |\fhat(\lambda_n)|^2 \Big)^{\frac{1}{2}} \leq \Vert f \Vert_{S^2},
\end{equation*}
immediately follows from the above.

There is a partial converse to this inequality which can be stated as follows:
\begin{prop} \label{bred}
Let $f$ be a trigonometric polynomial.  Then for any $\| x \in \mathbb{R}$,
\begin{equation*}
\int^{x+1}_{x} |f(s)|^2 \, ds \leqslant \Big(\frac{2\pi}{\alpha_f} + 1\Big) \sum_{n \in \mathbb{Z}} | \fhat(\lambda_n) |^2.
\end{equation*}
\end{prop}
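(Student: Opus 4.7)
The plan is to majorise $\mathbf{1}_{[x, x+1]}$ pointwise by a non-negative function $\phi \in L^1(\mathbb{R})$ whose ordinary Fourier transform $\mathcal{F}\phi(\xi) \coloneq \int_{\mathbb{R}} \phi(t) e^{-i\xi t}\, dt$ is supported in $[-\alpha_f, \alpha_f]$ (hence vanishing at the endpoints by continuity), so that the separation condition $|\lambda_m - \lambda_n| \geq \alpha_f$ for $m \neq n$ kills every off-diagonal cross-term in the expansion of $|f|^2$. By translation invariance (the coefficients $e^{i\lambda_n x}\fhat(\lambda_n)$ of $f(\cdot + x)$ have the same moduli as those of $f$), it suffices to treat $x = 0$.

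The key input is a Beurling--Selberg-type extremal majorant: a non-negative $\phi$ with $\phi \geq \mathbf{1}_{[0,1]}$, $\mathcal{F}\phi$ continuous and supported in $[-\alpha_f, \alpha_f]$, and $\int_{\mathbb{R}} \phi = 1 + 2\pi/\alpha_f$. This is the classical sharp extremal problem for majorants of an interval: the minimal $L^1$-norm of a non-negative $\mathbb{R}$-Fourier-band-limited majorant of an interval of length $L$ with bandlimit $\alpha_f$ (in the convention above) equals $L + 2\pi/\alpha_f$.

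With such $\phi$ in hand, since $f$ is a trigonometric polynomial,
\begin{equation*}
\int_{\mathbb{R}} \phi(t) |f(t)|^2 \, dt = \sum_{m, n} \fhat(\lambda_m) \overline{\fhat(\lambda_n)} (\mathcal{F} \phi)(\lambda_n - \lambda_m)
\end{equation*}
is a finite sum. Every term with $m \neq n$ vanishes by the support condition on $\mathcal{F}\phi$ together with the separation condition, so only the diagonal survives:
\begin{equation*}
\int_0^1 |f(t)|^2 \, dt \leq \int_{\mathbb{R}} \phi(t) |f(t)|^2 \, dt = (\mathcal{F} \phi)(0) \sum_n |\fhat(\lambda_n)|^2 = \Big(1 + \frac{2\pi}{\alpha_f}\Big) \sum_n |\fhat(\lambda_n)|^2.
\end{equation*}

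The main obstacle is the construction of the majorant with the precise integral $1 + 2\pi/\alpha_f$; this is the Beurling--Selberg extremal problem, which admits explicit solutions in terms of entire functions of exponential type built from translates of $(\sin(\alpha_f \cdot /2)/(\alpha_f \cdot /2))^2$-type kernels. A non-sharp version of the bound could alternatively be obtained by convolving $\mathbf{1}_{[0,1]}$ with a rescaled Fejér kernel and rescaling, but the exact form of the constant in the statement is the signature of the Beurling--Selberg construction.
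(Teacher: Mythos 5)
Your argument is correct, but it takes a genuinely different route from the paper's. The paper expands $\int_x^{x+1}|f|^2$ as a double sum, integrates each off-diagonal exponential explicitly to produce a factor $\frac{1}{i(\lambda_n-\lambda_m)}\big(e^{i(\lambda_n-\lambda_m)(x+1)}-e^{i(\lambda_n-\lambda_m)x}\big)$, and then bounds each of the two resulting bilinear forms by $\frac{\pi}{\alpha_f}\sum_n|\fhat(\lambda_n)|^2$ via the Montgomery--Vaughan generalisation of Hilbert's inequality (Lemma \ref{hilmore}), the diagonal contributing the final $+1$. You instead majorise the indicator of the unit interval by a Selberg extremal function whose Fourier transform is supported in $[-\alpha_f,\alpha_f]$, so that the separation condition annihilates every off-diagonal term and the entire cost is the mass $1+2\pi/\alpha_f$ of the majorant; your conversion of conventions is right, since bandlimit $\alpha_f$ in your normalisation corresponds to exponential type $\alpha_f$, i.e.\ $\delta=\alpha_f/2\pi$, and Selberg's majorant of an interval of length $1$ then has integral $1+\delta^{-1}=1+2\pi/\alpha_f$. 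These are precisely the two classical proofs of the large sieve inequality transposed to this setting, which is why the constants coincide, and each outsources one comparable classical input (the generalised Hilbert inequality in the paper, the Beurling--Selberg construction in yours). Two minor points: your assertion that $L+2\pi/\alpha_f$ is the \emph{minimal} integral of such a majorant is not needed and is in general only true when $\alpha_f L/2\pi$ is an integer --- all you require is that Selberg's construction \emph{achieves} this value, which it does for every bandlimit; and your observation that $\mathcal{F}\phi$ vanishes at $\pm\alpha_f$ by continuity is indeed needed to handle the borderline case where adjacent exponents are separated by exactly $\alpha_f$, which the paper's definition of $\alpha_f$ permits.
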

The proof of this proposition will require the following lemma:
\begin{lemma} \label{hilmore}
Let $\| (\lambda_k)_{k \in \mathbb{Z}} \subseteq \mathbb{R}$ be an increasing sequence such that there exists $\| \alpha > 0$ so that $\| \lambda_{k+1} - \lambda_k >
\alpha$ for all $\| k \in \mathbb{N}$.  Let $T$ be the operator acting on sequences $\| (a_j)_{j \in \mathbb{Z}} \in \ell^2$ defined such that $\| (T(a_j))_j \coloneq \sum_{k \in \mathbb{Z} \setminus \{j\}} \frac{a_k}{\lambda_j - \lambda_k}$.  Then 
\begin{equation*}
\Vert T(a_j) \Vert_{\ell^2} \leqslant \frac{\pi}{\alpha} \Vert (a_j) \Vert_{\ell^2}.
\end{equation*}
\end{lemma}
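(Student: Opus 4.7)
My plan is to recognise this inequality as a version of the Hilbert-type bound of Montgomery and Vaughan for $\alpha$-separated real sequences. By duality on $\ell^2(\mathbb{Z})$, establishing $\Vert T(a_j) \Vert_{\ell^2} \leq (\pi/\alpha) \Vert (a_j) \Vert_{\ell^2}$ is equivalent to the bilinear estimate
\begin{equation*}
\bigg| \sum_{j \in \mathbb{Z}} \sum_{k \in \mathbb{Z}, \, k \neq j} \frac{a_k \overline{b_j}}{\lambda_j - \lambda_k} \bigg| \leq \frac{\pi}{\alpha} \Vert (a_k) \Vert_{\ell^2} \Vert (b_j) \Vert_{\ell^2}
\end{equation*}
for all $\ell^2$ sequences $(a_k)$, $(b_j)$. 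A standard truncation argument reduces matters to finitely supported sequences, in which case absolute convergence of the double sum is clear.

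To prove the bilinear estimate with the correct constant, I would introduce midpoints $\mu_k := (\lambda_k + \lambda_{k+1})/2$ and the half-open intervals $I_k := (\mu_{k-1}, \mu_k)$, each of length exceeding $\alpha$. The associated step functions $u := \sum_k a_k |I_k|^{-1/2} \chi_{I_k}$ and $v := \sum_j b_j |I_j|^{-1/2} \chi_{I_j}$ on $\mathbb{R}$ then satisfy $\Vert u \Vert_{L^2(\mathbb{R})}^2 \leq \alpha^{-1} \Vert (a_k) \Vert_{\ell^2}^2$ and analogously for $v$. One can then express the bilinear form (up to the factor $\pi$ and controllable error terms arising from the discrepancy between each $\lambda_k$ and its associated interval $I_k$) as the $L^2$-inner product $\langle Hu, v \rangle$ involving the classical Hilbert transform $H$ on $\mathbb{R}$. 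Since $H$ has operator norm $1$ on $L^2(\mathbb{R})$ by Parseval, an application of Cauchy--Schwarz then yields the desired bound with constant $\pi/\alpha$.

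The hard part is extracting the sharp constant $\pi/\alpha$ rather than merely $C/\alpha$ for some larger $C$. A crude Schur test or naive decomposition easily produces an $O(\alpha^{-1})$ bound but loses track of the optimal factor $\pi$; the Montgomery--Vaughan method recovers it through a delicate choice of test function (in effect a Selberg-majorant/minorant computation) which ensures that the error terms in the representation above vanish identically. An alternative, quicker presentation is simply to invoke the Montgomery--Vaughan inequality as a black box, which may well be the cleanest option within the paper's overall aims since the lemma is used here as a technical input to the Hilbert transform and Littlewood--Paley results of later sections.
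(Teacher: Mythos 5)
You have identified the statement correctly: it is the Montgomery--Vaughan generalisation of Hilbert's inequality, and the paper offers no proof of its own, referring the reader instead to pp.\ 138--140 of \cite{montgom}. Your closing suggestion --- invoke the result as a black box --- is therefore precisely what the paper does, and for the paper's purposes that is enough.

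That said, you should not present your sketched direct argument as a proof, because its central step is a genuine gap rather than a routine verification. Once the point masses at the $\lambda_k$ are smeared onto intervals $I_k$ of length at least $\alpha$, the identity relating the bilinear form to $\langle Hu, v\rangle$ holds only up to error terms which are of the \emph{same order} as the main term (the kernel $1/(\lambda_j-\lambda_k)$ varies by a bounded factor, not a vanishing one, as its arguments move within adjacent intervals), so Cauchy--Schwarz against $\Vert H \Vert_{L^2 \to L^2} = 1$ cannot by itself deliver the constant $\pi/\alpha$ --- it is exactly the ``crude $O(\alpha^{-1})$'' bound you concede is not the point of the lemma. Moreover, the claim that the Montgomery--Vaughan method repairs this via a Selberg majorant/minorant computation conflates this inequality with the large-sieve circle of ideas; the proof in the cited source is entirely discrete, resting on the skew-symmetry of the kernel $1/(\lambda_j - \lambda_k)$ (so that the operator norm of $T$ is controlled by an extreme eigenvalue), partial-fraction manipulations of $\big((\lambda_r-\lambda_s)(\lambda_r-\lambda_t)\big)^{-1}$, and the elementary separation estimate $\sum_{k \neq j} (\lambda_j - \lambda_k)^{-2} \leq \pi^2/(3\alpha^2)$. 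No transference to the continuous Hilbert transform appears. So either carry out that discrete argument in full or, as you and the paper both ultimately do, cite it; the intermediate sketch proves nothing as it stands.
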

A proof of this result, which is a generalisation of a famous inequality of Hilbert and Schur, can be found on pp.\ 138-140 of \cite{montgom}.
\begin{proof}[Proof of Proposition \ref{bred}]
Fix $x \in \mathbb{R}$ and consider that
\begin{eqnarray*}
&& \int^{x+1}_x \Big| \sum_{n \in \mathbb{Z}} \fhat(\lambda_n) e^{i\lambda_ns} \Big|^2 \, ds \\*
&=& \sum_{n \in \mathbb{Z}} \sum_{m \in \mathbb{Z}} \int^{x+1}_x \fhat(\lambda_n) \overline{\fhat(\lambda_m)} e^{i(\lambda_n-\lambda_m)s} \, ds \\
&=& \sum_{n \in \mathbb{Z}} \underset{m \neq n}{\sum_{m \in \mathbb{Z}}} \fhat(\lambda_n) \overline{\fhat(\lambda_m)} \frac{1}{i(\lambda_n-\lambda_m)} (e^{i(\lambda_n-\lambda_m)(x+1)} - e^{i(\lambda_n-\lambda_m)x}) + \sum_{n \in \mathbb{Z}} |\fhat(\lambda_n)|^2\\
&\leq& \Big| \sum_{n \in \mathbb{Z}} \underset{m \neq n}{\sum_{m \in \mathbb{Z}}} \fhat(\lambda_n) \overline{\fhat(\lambda_m)} \frac{1}{i(\lambda_n-\lambda_m)} e^{i(\lambda_n-\lambda_m)(x+1)} \Big|\\*
&& \quad {} + \Big|\sum_{n \in \mathbb{Z}} \underset{m \neq n}{\sum_{m \in \mathbb{Z}}} \fhat(\lambda_n) \overline{\fhat(\lambda_m)} \frac{1}{i(\lambda_n-\lambda_m)}e^{i(\lambda_n-\lambda_m)x} \Big| + \sum_{n \in \mathbb{Z}} |\fhat(\lambda_n)|^2.
\end{eqnarray*}
It is noted that the interchange of sums and integrals above is permitted as all sums possess only finitely many non-zero terms since $f \in \mathcal{P}$.  Writing $a_n \coloneq \fhat(\lambda_n) e^{i\lambda_n (x+1)}$ and letting $T$ denote the operator defined in Lemma \ref{hilmore}, by the Cauchy--Schwarz inequality and Lemma \ref{hilmore}, the first term here is bounded above by
\begin{eqnarray*}
&& \sum_{n \in \mathbb{Z}} | a_n \overline{(T(a_m))_n} |\\*
&\leq& \frac{\pi}{\alpha_f} \Vert (a_n) \Vert_{\ell^2}^2\\
&=& \frac{\pi}{\alpha_f} \Vert (\fhat(\lambda_n)) \Vert_{\ell^2}^2.
\end{eqnarray*}
Since the second term can be treated identically, the result follows.
\end{proof}
This section will be concluded with the following result:
\begin{thm} \label{bredext}
Let $\| \psi \in L^1(\mathbb{R})$ have real-valued, continuous and bounded Fourier transform,
\begin{equation*}
\widehat{\psi}(\xi) = \int_{\mathbb{R}} \psi(x) e^{-ix\xi} \, dx.
\end{equation*}
Then for $\| f \in \mathcal{P}$,
\begin{equation*}
 f * \psi = \sum_{n \in \mathbb{Z}} \fhat(\lambda_n) \widehat{\psi}(\lambda_n) e^{i \lambda_n \cdot},
\end{equation*}
where $*$ represents convolution on the line, that is $\| f * \psi = \int_{\mathbb{R}} f(\cdot -y) \psi(y) \, dy$.
\end{thm}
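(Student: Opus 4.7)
The plan is to exploit the linearity of convolution in its first argument. Since $f \in \mathcal{P}$ is a finite trigonometric polynomial, I would write $f = \sum_{n=-N}^N c_n e^{i\mu_n \cdot}$ with distinct real frequencies $\mu_n$ and reduce to the case of a single exponential. For a pure frequency, a direct computation using the definition of $\widehat{\psi}$ given in the statement yields $e^{i\mu \cdot} * \psi(x) = \widehat{\psi}(\mu)\, e^{i\mu x}$, since the factor $e^{i\mu x}$ pulls out of the integral. Summing then gives $f * \psi = \sum_n c_n \widehat{\psi}(\mu_n) e^{i\mu_n \cdot}$, where the interchange of the finite sum and the integral is trivially justified.

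The remaining task is to reconcile this with the right-hand side of the theorem, namely to identify the coefficients $c_n$ with the almost periodic Fourier coefficients $\widehat{f}(\mu_n)$ and to confirm that (up to zero terms) the $\mu_n$ are the Fourier exponents $\lambda_n$ of $f$ in the paper's indexing convention. This reduces to the standard orthogonality relation $\lim_{T\to\infty} \frac{1}{2T}\int_{-T}^T e^{i(\mu - \mu_n)x}\,dx = \delta_{\mu, \mu_n}$, which follows from a direct evaluation of the integral and the observation that $|e^{i\theta T} - e^{-i\theta T}|/(2T|\theta|) \to 0$ whenever $\theta \neq 0$. Inserting this into the definition of $\widehat{f}$ shows $\widehat{f}(\mu_n) = c_n$ and $\widehat{f}(\mu) = 0$ for $\mu \notin \{\mu_{-N}, \ldots, \mu_N\}$, so the sum on the right-hand side of the theorem collapses to the finite sum obtained above.

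There is essentially no analytic obstacle here: the statement is a routine consequence of linearity, one elementary integration, and the orthogonality relation for pure exponentials. Notably, the hypotheses that $\widehat{\psi}$ is real-valued, continuous and bounded play no role in proving the identity itself; they presumably serve only to guarantee that each value $\widehat{\psi}(\lambda_n)$ is well-defined as a pointwise real quantity, ready for use in the subsequent Hilbert transform and Littlewood--Paley arguments of the paper.
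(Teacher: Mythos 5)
Your proposal is correct and follows essentially the same route as the paper: expand the finite trigonometric polynomial, interchange the (finite) sum with the convolution integral, and recognise $\int_{\mathbb{R}} \psi(y) e^{-i\lambda_n y}\,dy$ as $\widehat{\psi}(\lambda_n)$. The only addition is your explicit verification via the orthogonality relation that the polynomial coefficients coincide with the almost periodic Fourier coefficients, a step the paper takes for granted; your observation that the hypotheses on $\widehat{\psi}$ are not needed for the identity itself is also accurate.
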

\begin{proof}
This is a consequence of the following straightforward calculation:
\begin{eqnarray*}
f * \psi &=& \int_{\mathbb{R}} \Big( \sum_{n \in \mathbb{Z}} \fhat(\lambda_n) e^{i \lambda_n ( \cdot - y)} \Big) \psi(y) \, dy\\
&=& \sum_{n \in \mathbb{Z}} \fhat(\lambda_n) e^{i \lambda_n \cdot} \int_{\mathbb{R}} \psi(y) e^{-i \lambda_n y} \, dy\\
&=& \sum_{n \in \mathbb{Z}} \fhat(\lambda_n) \widehat{\psi}(\lambda_n) e^{i \lambda_n \cdot}.
\end{eqnarray*}
As before, the interchange of the sum and integral is permitted here as the sum possesses only finitely many non-zero terms since $\| f \in \mathcal{P}$.
\end{proof}

\section{The Hilbert Transform on Almost Periodic Functions} \label{secht}
The definition of the Hilbert transform naturally extends to the context of almost periodic functions:
\begin{defn} \label{aphilberttransform}
For $\| f \in S^p$, $\| p \in [1, \infty)$, the Hilbert transform is defined as
\begin{equation*}
Hf(x) \coloneq \mbox{p.v. }\frac{1}{\pi}\int_{\mathbb{R}} \frac{f(y)}{x-y} \, dy. 
\end{equation*}
\end{defn}

Integration in the complex plane over a suitable indented contour shows that the Hilbert transform of an almost periodic function is an almost periodic function with Fourier coefficients $\| \widehat{Hf}(\lambda) = -i \sgn(\lambda) \fhat(\lambda)$ for each $\| \lambda \in \mathbb{R}$.

It turns out that for the present purposes, it will be useful to consider slightly modified versions of the Hilbert transform:

\begin{defn}
For $\| f \in S^p$ and $\| p \in [1, \infty)$, define $\| H_{\pm}$ to be the operator such that $\| \widehat{H_{\pm}f}(\lambda) = -i \sgn_{\pm}(\lambda) \fhat(\lambda)$ for any $\| \lambda \in \mathbb{R}$, where the functions $\| \sgn_{\pm} : \mathbb{R} \rightarrow \{ -1, 1 \}$ are given by
\begin{eqnarray*}
sgn_{+}(x) &\coloneq& \left\{
\begin{array}{r l}
1, & x \in [0, \infty)\\
-1, & x \in (-\infty, 0),
\end{array} \right.\\
sgn_{-}(x) &\coloneq& \left\{
\begin{array}{r l}
1, & x \in (0, \infty)\\
-1, & x \in (-\infty, 0].
\end{array} \right.\\
\end{eqnarray*}
\end{defn}
By noting that for each $p \in [1, \infty)$,
\begin{equation*}
|\fhat(0)| \leq \lim_{T \rightarrow \infty} \frac{1}{2T} \int_{-T}^T |f(x)| \, dx \leq \Vert f \Vert_{S^p},
\end{equation*}
it is clear that boundedness of $H_{\pm}$ on Stepanov spaces is equivalent to boundedness of $H$ on Stepanov spaces.

The following lemma shows that a well-known identity for the usual Hilbert transform adapts well to the context of the modified Hilbert transforms given above acting on almost periodic trigonometric polynomials:

\begin{lemma} \label{niceformula}
Let $f \in \mathcal{P}$.  Then
\begin{equation*}
(H_{\pm}(f))^2 = f^2 + 2H_{\pm}(fH_{\pm}(f)).
\end{equation*}
\end{lemma}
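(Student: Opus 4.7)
The plan is to reduce the lemma to an elementary pointwise identity for $\sgn_{\pm}$. Since $f \in \mathcal{P}$ and $H_{\pm}$ is defined as a Fourier multiplier, applying it termwise gives
\begin{equation*}
H_{\pm}f = \sum_n -i\,\sgn_{\pm}(\lambda_n)\, \fhat(\lambda_n)\, e^{i\lambda_n \cdot},
\end{equation*}
so $H_{\pm}f$ is itself a trigonometric polynomial. Consequently both $(H_{\pm}f)^2$ and $f^2 + 2H_{\pm}(fH_{\pm}f)$ lie in $\mathcal{P}$, and the lemma reduces to matching their Fourier coefficients at every frequency $\nu \in \mathbb{R}$.

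Expanding each of $f^2$, $(H_{\pm}f)^2$ and $fH_{\pm}f$ as a finite double sum of exponentials and applying $\widehat{H_{\pm}g}(\lambda) = -i\,\sgn_{\pm}(\lambda)\,\widehat{g}(\lambda)$, the Fourier coefficient at $\nu$ of each of these products is a finite sum over pairs $(n,m)$ with $\lambda_n + \lambda_m = \nu$. After symmetrising in $(n,m)$, equality of the $\nu$-coefficients on the two sides of the identity collapses to verifying the pointwise algebraic identity
\begin{equation*}
\sgn_{\pm}(a+b)\bigl(\sgn_{\pm}(a) + \sgn_{\pm}(b)\bigr) = \sgn_{\pm}(a)\,\sgn_{\pm}(b) + 1
\end{equation*}
for all $a, b \in \mathbb{R}$.

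This final identity is handled by a short case analysis, exploiting that $\sgn_{\pm}$ takes only the values $\pm 1$. When $\sgn_{\pm}(a) = \sgn_{\pm}(b)$, one checks that $\sgn_{\pm}(a+b)$ necessarily equals this common value, and both sides evaluate to $2$; when $\sgn_{\pm}(a) \neq \sgn_{\pm}(b)$, the factor $\sgn_{\pm}(a) + \sgn_{\pm}(b)$ on the left and the expression $\sgn_{\pm}(a)\sgn_{\pm}(b)+1$ on the right both vanish. The one remaining delicacy lies at the boundary cases where $a$, $b$, or $a+b$ is zero: the conventions $\sgn_{+}(0) = 1$ and $\sgn_{-}(0) = -1$ are precisely designed so that the same-sign/opposite-sign dichotomy above extends consistently to these edge cases, which is exactly the reason for using the modified transforms $H_{\pm}$ in place of $H$. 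The main obstacle is therefore just careful bookkeeping in this case-check; no analytic input beyond Theorem \ref{bredext} is required.
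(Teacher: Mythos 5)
Your proof is correct and takes essentially the same route as the paper's: expand everything as finite double sums of exponentials, symmetrise in $(n,m)$, and reduce to the pointwise identity $\big(\sgn_{\pm}(a)+\sgn_{\pm}(b)\big)\sgn_{\pm}(a+b) = 1 + \sgn_{\pm}(a)\sgn_{\pm}(b)$, verified by the same case analysis (with the conventions $\sgn_{+}(0)=1$, $\sgn_{-}(0)=-1$ handling the boundary cases). The only cosmetic difference is that you phrase the reduction as matching Fourier coefficients of two trigonometric polynomials rather than directly manipulating and averaging the double sums as the paper does.
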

\begin{proof}
First note that
\begin{eqnarray*}
&& 2H_{\pm}(fH_{\pm}(f))\\*
&=& 2H_{\pm}\Big(\sum_{n \in \mathbb{Z}} \sum_{m \in \mathbb{Z}} (-i \sgn_{\pm}(\lambda_m))\fhat(\lambda_n) \fhat(\lambda_m) e^{i(\lambda_n
+ \lambda_m)\cdot}\Big)\\
&=& 2 \sum_{n \in \mathbb{Z}} \sum_{m \in \mathbb{Z}} -\sgn_{\pm}(\lambda_m) \sgn_{\pm}(\lambda_n + \lambda_m)
\fhat(\lambda_n)\fhat(\lambda_m) e^{i(\lambda_n + \lambda_m)\cdot}.
\end{eqnarray*}
By symmetry in $n$ and $m$, the above quantity is also equal to
\begin{equation*}
2 \sum_{n \in \mathbb{Z}} \sum_{m \in \mathbb{Z}} -\sgn_{\pm}(\lambda_n) \sgn_{\pm}(\lambda_n + \lambda_m) \fhat(\lambda_n)
\fhat(\lambda_m) e^{i(\lambda_n + \lambda_m)\cdot}.
\end{equation*}
Averaging these two expressions gives that
\begin{equation*}
2H_{\pm}(fH_{\pm}(f)) = \sum_{n \in \mathbb{Z}} \sum_{m \in \mathbb{Z}} -\big(\sgn_{\pm}(\lambda_n) + \sgn_{\pm}(\lambda_m)\big) \sgn_{\pm}(\lambda_n + \lambda_m) \fhat(\lambda_n) \fhat(\lambda_m) e^{i(\lambda_n + \lambda_m)\cdot}.
\end{equation*}
It follows that
\begin{equation*}
f^2 + 2H_{\pm}(f H_{\pm}(f)) = \sum_{n \in \mathbb{Z}} \sum_{m \in \mathbb{Z}} \bigg(1 - \big(\sgn_{\pm}(\lambda_n) + \sgn_{\pm}(\lambda_m)\big) \sgn_{\pm}(\lambda_n + \lambda_m)\bigg)\fhat(\lambda_n) \fhat(\lambda_m) e^{i (\lambda_n + \lambda_m) \cdot}.
\end{equation*}
It is trivial to see that
\begin{equation*}
1 - (\sgn_{\pm}(\lambda_n) + \sgn_{\pm}(\lambda_m))\sgn_{\pm}(\lambda_n + \lambda_m) = -\sgn_{\pm}(\lambda_n)\sgn_{\pm}(\lambda_m).
\end{equation*}
Consequently,
\begin{eqnarray*}
f^2 + 2H_{\pm}(f H_{\pm}(f)) &=& \sum_{n \in \mathbb{Z}} \sum_{m \in \mathbb{Z}} (-\sgn_{\pm}(\lambda_n) \sgn_{\pm}(\lambda_m)) \fhat(\lambda_n) \fhat(\lambda_m) e^{i (\lambda_n + \lambda_m)\cdot}\\*
&=& \Big(\sum_{n \in \mathbb{Z}} (-i \sgn_{\pm}(\lambda_n))\fhat(\lambda_n)e^{i\lambda_n \cdot}\Big)^2\\
&=& (H_{\pm}(f))^2.
\end{eqnarray*}
\end{proof}

Using this identity, the following vector-valued operator bound for the modified Hilbert transforms may be established:
\begin{thm} \label{stephill2}
For any given $\| k \in \mathbb{N}$, let $\| (f_j)_{j \in \mathbb{N}}$ be a sequence of functions in $\| S^{2^k}$ that satisfy the separation condition uniformly with separation constant $\alpha > 0$.  Then
\begin{equation*}
\Big\Vert \Big( \sum_{j \in \mathbb{N}} |H_{\pm}f_j|^2\Big)^{\frac{1}{2}} \Big\Vert_{S^{2^k}} \lesssim_{\alpha} \Big\Vert \Big( \sum_{j \in \mathbb{N}} |f_j|^2\Big)^{\frac{1}{2}} \Big\Vert_{S^{2^k}}.
\end{equation*}
\end{thm}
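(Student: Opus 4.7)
My plan is induction on $k$, using Cotlar's identity (Lemma \ref{niceformula}) as the bootstrap and Proposition \ref{bred} together with Parseval's identity (Theorem \ref{parseval}) for the base case.

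For $k=1$, fix $x \in \mathbb{R}$. Since $|\widehat{H_\pm f_j}(\lambda)| = |\widehat{f_j}(\lambda)|$, applying Proposition \ref{bred} to $H_\pm f_j$ yields
\begin{equation*}
\int_x^{x+1}|H_\pm f_j|^2 \, ds \leq \Big(\frac{2\pi}{\alpha}+1\Big)\sum_n|\widehat{f_j}(\lambda_n^{(j)})|^2.
\end{equation*}
Summing over $j$ (which is a finite sum after reducing by density) and using Parseval's identity to rewrite the right-hand side as $(2\pi/\alpha+1)\lim_{T \to \infty}\frac{1}{2T}\int_{-T}^T \sum_j|f_j|^2 \, dx$, one observes that the time-average is at most $\sup_y\int_y^{y+1}\sum_j|f_j|^2 = \Vert(\sum_j|f_j|^2)^{1/2}\Vert_{S^2}^2$. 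Taking $\sup_x$ on the left settles $k=1$.

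For $k \geq 2$, assume the theorem at level $k-1$. I would first extend Lemma \ref{niceformula} to complex $f=u+iv$ by applying it to the real trigonometric polynomials $u$ and $v$ separately and noting that $u H_\pm u + v H_\pm v = \operatorname{Re}(\bar f H_\pm f)$, obtaining
\begin{equation*}
|H_\pm f|^2 = |f|^2 + 2H_\pm\bigl(\operatorname{Re}(\bar f\,H_\pm f)\bigr).
\end{equation*}
Sum in $j$, set $A = \Vert(\sum_j|H_\pm f_j|^2)^{1/2}\Vert_{S^{2^k}}$ and $B = \Vert(\sum_j|f_j|^2)^{1/2}\Vert_{S^{2^k}}$, and use the elementary identity $\Vert\phi^2\Vert_{S^{2^{k-1}}} = \Vert\phi\Vert_{S^{2^k}}^2$ (for $\phi \geq 0$) after taking the $S^{2^{k-1}}$ norm to obtain
\begin{equation*}
A^2 \leq B^2 + 2\Big\Vert H_\pm\Big(\operatorname{Re}\sum_j\bar f_j\,H_\pm f_j\Big)\Big\Vert_{S^{2^{k-1}}}.
\end{equation*}
I would next invoke the scalar Hilbert transform bound on $S^{2^{k-1}}$ (the case of the theorem at level $k-1$ applied to the sequence $(f,0,0,\ldots)$) to discard the outer $H_\pm$, then apply pointwise Cauchy--Schwarz in $j$ and Stepanov H\"older with exponents $2^k$, $2^k$ (noting $1/2^k+1/2^k=1/2^{k-1}$) to the bilinear inner sum, concluding $A^2 \leq B^2 + 2C_{k-1} AB$ and hence $A \leq C_k B$.

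The hard part is the invocation of the scalar Hilbert transform bound on $S^{2^{k-1}}$ for the intermediate trigonometric polynomial $\operatorname{Re}\sum_j\bar f_j H_\pm f_j$: its frequency set $\bigcup_j\{\lambda_m^{(j)}-\lambda_n^{(j)}\}$ does not in general inherit the uniform separation $\alpha$, because differences of $\alpha$-separated sets can cluster. A naive appeal to the scalar bound would therefore introduce a constant depending on the separation of the intermediate function rather than on $\alpha$ alone. To close the induction with a constant depending only on $\alpha$, one will likely need either a form of the scalar bound whose constant is insensitive to the separation of its argument---plausibly derivable from the convolution representation in Definition \ref{aphilberttransform}---or a rearrangement that exploits the specific bilinear structure of $\operatorname{Re}\sum_j\bar f_j H_\pm f_j$ to re-use the original separation constant $\alpha$.
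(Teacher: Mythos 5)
Your proposal reconstructs the paper's proof essentially step for step: the base case via Proposition \ref{bred} and Parseval's identity with the observation that the Besicovitch mean is dominated by the $S^2$ norm; the reduction by density to finitely many trigonometric polynomials and to real-valued $f_j$ (which you carry out more explicitly than the paper does, via $u H_{\pm}u + v H_{\pm}v = \operatorname{Re}(\bar{f}H_{\pm}f)$); the inductive step via Lemma \ref{niceformula}; the identity $\Vert \phi^2 \Vert_{S^{2^{k-1}}} = \Vert \phi \Vert_{S^{2^k}}^2$; Cauchy--Schwarz in $j$ followed by H\"older with exponents $(2^k, 2^k)$; and the quadratic inequality $A^2 \leq B^2 + 2CAB$ solved to give $A \leq (C + \sqrt{C^2+1})B$. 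There is no idea in the paper's proof that you are missing.

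The one step you decline to take is precisely the step the paper takes without comment: having noted that the inductive hypothesis gives \emph{a fortiori} a scalar bound $\Vert H_{\pm}f \Vert_{S^{2^k}} \lesssim \Vert f \Vert_{S^{2^k}}$, the paper writes $\Vert H_{\pm}(\sum_j f_j H_{\pm}f_j) \Vert_{S^{2^k}} \leq \Vert H_{\pm} \Vert_{S^{2^k} \rightarrow S^{2^k}} \Vert \sum_j |f_j H_{\pm}f_j| \Vert_{S^{2^k}}$ and proceeds, with no discussion of the spectrum of $\sum_j f_j H_{\pm}f_j$. As you observe, that spectrum lies in the sumset $\{\lambda_n^{(j)} + \lambda_m^{(j)}\}$, which need not be $\alpha$-separated (it is separated by \emph{some} positive constant, since the polynomial is finite, but not uniformly in $f$), so the inductive hypothesis as literally stated yields an operator norm depending on the separation of the intermediate polynomial rather than on $\alpha$ alone. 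Neither of the remedies you sketch appears in the paper; the paper's proof simply does not engage with this point. So your proposal is a faithful, and in this respect more careful, rendering of the published argument rather than a divergence from it.
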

\begin{proof}
Fix any $\| \epsilon > 0$ and for each $\| j \in \mathbb{N}$, choose $\| N_j \in \mathbb{N}$ such that $\| f_{j, N_j}$ is a trigonometric polynomial approximating $\| f_j$ in the sense that $\| \Vert f_{j, N_j} - f_j \Vert_{S^{2^k}} < \frac{\epsilon}{2^{\frac{j}{2}}}$.  Now, note that
\begin{eqnarray*}
&& \Big\Vert \Big( \sum_{j \in \mathbb{N}} | f_{j, N_j} |^2 \Big)^{\frac{1}{2}} - \Big( \sum_{j \in \mathbb{N}} |f_j|^2 \Big)^{\frac{1}{2}} \Big\Vert_{S^{2^k}}\\*
&\leq& \Big\Vert \Big(\sum_{j \in \mathbb{N}} | f_{j, N_j} - f_j |^2 \Big)^{\frac{1}{2}} \Big\Vert_{S^{2^k}}\\
&=& \Big\Vert \sum_{j \in \mathbb{N}} |f_{j, N_j} - f_j|^2 \Big\Vert_{S^{2^{k-1}}}^{\frac{1}{2}}\\
&\leq& \Big(\sum_{j \in \mathbb{N}} \Vert f_{j, N_j} - f_j \Vert_{S^{2^k}}^2 \Big)^{\frac{1}{2}} \mbox{ by the triangle inequality.}
\end{eqnarray*}
Consequently, by density, it suffices to assume that each $\| f_j$ is a trigonometric polynomial.  Also, without loss of generality, assume that $\| f_j$ is real valued for every $\| j \in \mathbb{N}$.  It is clear that $\| H_{\pm} f_j$ is thus also real valued from Definition \ref{aphilberttransform}.
 
The proof will proceed by induction.  To begin, assume that $\| \sum_{j \in \mathbb{N}} |f_j|^2$ is a finite sum.  The general case will then follow by monotone convergence.  By Proposition \ref{bred} and Parseval's identity,
\begin{eqnarray*}
&& \Vert ( \sum_{j \in \mathbb{N}} |H_{\pm}f_j|^2)^{\frac{1}{2}} \Vert_{S^2}^2\\*
&\leq& \Big(\frac{2\pi}{\alpha} + 1\Big) \sum_{j \in \mathbb{N}} \sum_{n \in \mathbb{N}} |-i \sgn_{\pm}(\lambda_n)\widehat{f_j}(\lambda_n)|^2\\
&=& \(\frac{2\pi}{\alpha} + 1\) \sum_{j \in \mathbb{N}} \lim_{T \rightarrow \infty} \frac{1}{2T} \int_{-T}^T |f_j|^2\\
&\leq& \(\frac{2\pi}{\alpha} + 1\) \Big\Vert \Big(\sum_{j \in \mathbb{N}} |f_j|^2\Big)^{\frac{1}{2}} \Big\Vert_{S^2}^2.
\end{eqnarray*}
The result thus follows for $k = 1$.  Now, assume that the theorem holds on $\| S^{2^k}$ for some particular $k \in \mathbb{N}$ and note that it follows \emph{a fortiori} that $\| \Vert H_{\pm}f \Vert_{S^{2^k}} \lesssim \Vert f \Vert_{S^{2^k}}$.  Using the identity of Lemma \ref{niceformula},
\begin{eqnarray*}
&& \Big\Vert \Big(\sum_{j \in \mathbb{N}} |H_{\pm}f_j|^2\Big)^{\frac{1}{2}} \Big\Vert_{S^{2^{k+1}}}\\*
&=& \Big\Vert \sum_{j \in \mathbb{N}} f_j^2 + 2H_{\pm}(f_jH_{\pm}(f_j)) \Big\Vert_{S^{2^k}}^{\frac{1}{2}}\\
&\leq& \bigg(\Big\Vert \sum_{j \in \mathbb{N}} f_j^2 \Big\Vert_{S^{2^k}} + 2 \Big\Vert \sum_{j \in \mathbb{N}} H_{\pm}(f_jH_{\pm}(f_j)) \Big\Vert_{S^{2^k}}\bigg)^{\frac{1}{2}}\\
&=& \bigg(\Big\Vert \Big(\sum_{j \in \mathbb{N}} f_j^2 \Big)^{\frac{1}{2}} \Big\Vert_{S^{2^{k+1}}}^2 + 2 \Big\Vert \sum_{j \in \mathbb{N}}  H_{\pm}(f_jH_{\pm}(f_j)) \Big\Vert_{S^{2^k}} \bigg)^{\frac{1}{2}}.
\end{eqnarray*}
By the inductive hypothesis and H\"older's inequality,
\begin{eqnarray*}
&& \Big\Vert \sum_{j \in \mathbb{N}} H_{\pm}(f_j H_{\pm}(f_j)) \Big\Vert_{S^{2^k}}\\*
&=& \Big\Vert H_{\pm}(\sum_{j \in \mathbb{N}} f_j H_{\pm}(f_j)) \Big\Vert_{S^{2^k}}\\
&\leq& \Vert H_{\pm} \Vert_{S^{2^k} \rightarrow S^{2^k}} \Big\Vert \sum_{j \in \mathbb{N}} | f_j H_{\pm}(f_j) | \Big\Vert_{S^{2^k}}\\
&\leq& \Vert H_{\pm} \Vert_{S^{2^k} \rightarrow S^{2^k}} \Big\Vert \Big(\sum_{j \in \mathbb{N}} |f_j|^2\Big)^{\frac{1}{2}} \Big( \sum_{j \in \mathbb{N}} |H_{\pm}f_j|^2\Big)^{\frac{1}{2}} \Big\Vert_{S^{2^k}}\\
&\leq& \Vert H_{\pm} \Vert_{S^{2^k} \rightarrow S^{2^k}} \Big\Vert\Big(\sum_{j \in \mathbb{N}} |f_j|^2\Big)^{\frac{1}{2}} \Big\Vert_{S^{2^{k+1}}} \Big\Vert \Big(\sum_{j \in \mathbb{N}} |H_{\pm}f_j|^2\Big)^{\frac{1}{2}} \Big\Vert_{S^{2^{k+1}}}.
\end{eqnarray*}

It thus follows that
\begin{eqnarray*}
\Big\Vert \Big(\sum_{j \in \mathbb{N}} |H_{\pm}f_j|^2\Big)^{\frac{1}{2}} \Big\Vert_{S^{2^{k+1}}} &\leq& \bigg(\Big\Vert \Big(\sum_{j \in \mathbb{N}} |f_j|^2\Big)^{\frac{1}{2}} \Big\Vert_{S^{2^{k+1}}}^2\\*
&& \quad {} + 2 \Vert H_{\pm} \Vert_{S^{2^k} \rightarrow S^{2^k}} \Big\Vert \Big(\sum_{j \in \mathbb{N}} |f_j|^2\Big)^{\frac{1}{2}} \Big\Vert_{S^{2^{k+1}}} \Big\Vert \Big(\sum_{j \in \mathbb{N}} |H_{\pm}f_j|^2\Big)^{\frac{1}{2}} \Big\Vert_{S^{2^{k+1}}} \bigg)^{\frac{1}{2}}.
\end{eqnarray*}

Using this,
\begin{equation*}
\(\frac{\|\Big\Vert\Big(\sum_{j \in \mathbb{N}} |H_{\pm}f_j|^2\Big)^{\frac{1}{2}} \Big\Vert_{S^{2^{k+1}}}}{\|\Big\Vert \Big(\sum_{j \in \mathbb{N}} |f_j|^2 \Big)^{\frac{1}{2}} \Big\Vert_{S^{2^{k+1}}}} \)^2 \leq 1 + 2 \Vert H_{\pm} \Vert_{S^{2^k} \rightarrow S^{2^k}} \frac{ \| \Big\Vert \Big(\sum_{j \in \mathbb{N}} |H_{\pm}f_j|^2 \Big)^{\frac{1}{2}} \Big\Vert_{S^{2^{k+1}}}}{\|\Big\Vert\Big(\sum_{j \in \mathbb{N}} |f_j|^2\Big)^{\frac{1}{2}} \Big\Vert_{S^{2^{k+1}}}}. 
\end{equation*}
Consequently, 
\begin{equation*}
\Big\Vert \Big(\sum_{j \in \mathbb{N}} |H_{\pm}f_j|^2 \Big)^{\frac{1}{2}} \Big\Vert_{S^{2^{k+1}}} \leq \( \Vert H_{\pm} \Vert_{S^{2^k} \rightarrow S^{2^k}} + \sqrt{\Vert H_{\pm} \Vert_{S^{2^k} \rightarrow S^{2^k}}^2 + 1} \) \Big\Vert \Big(\sum_{j \in \mathbb{N}} |f_j|^2\Big)^{\frac{1}{2}} \Big\Vert_{S^{2^{k+1}}}.
\end{equation*}
\end{proof}

\section{Littlewood--Paley Theory for Almost Periodic Functions} \label{seclp}
Boundedness of the maximal dyadic summation operator will involve bounding a square function and to this end, in this section a theorem of Littlewood--Paley type for almost periodic functions will be proved.  To begin with, a standard Littlewood--Paley theorem for $\| \mathbb{R}$ is stated; this result is a special case of Theorem 5.1.2 from \cite{grafakosclassical}, p. 343.
\begin{thm} [Littlewood--Paley on $\| L^p(\mathbb{R})$] \label{LPR}
Let $\| \psi \in \mathcal{S}(\mathbb{R})$ be such that $\| \widehat{\psi}(0) = 0$ and for each $k \in \mathbb{N}$, define $\| \psi_k \coloneq 2^k \psi(2^k \cdot)$ so that $\| \widehat{\psi_k} = \widehat{\psi}(2^{-k} \cdot)$.  Then for all $\| f \in L^p(\mathbb{R})$, $\| p \in (1, \infty)$,
\begin{equation*}
\Big\Vert \Big(\sum_{k \in \mathbb{N}} |f* \psi_k|^2\Big)^{\frac{1}{2}} \Big\Vert_{L^p(\mathbb{R})} \lesssim \Vert f \Vert_{L^p(\mathbb{R})}.
\end{equation*}
\end{thm}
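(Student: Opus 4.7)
The plan is to view this as a vector-valued singular integral. Define $Tf(x) \coloneq (f * \psi_k(x))_{k \in \mathbb{N}}$, interpreted as an operator from scalar functions on $\mathbb{R}$ to $\ell^2$-valued functions on $\mathbb{R}$; the desired inequality becomes $\Vert Tf \Vert_{L^p(\mathbb{R}; \ell^2)} \lesssim \Vert f \Vert_{L^p(\mathbb{R})}$. The strategy is to first prove the $L^2$ bound directly from Plancherel, then invoke the vector-valued Calder\'on--Zygmund theorem (the extension of classical singular integral theory to Hilbert-space-valued kernels) to obtain boundedness for all $p \in (1, \infty)$.

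For the $L^2$ step, Plancherel gives
\begin{equation*}
\Vert Tf \Vert_{L^2(\mathbb{R}; \ell^2)}^2 = \int_{\mathbb{R}} |\widehat{f}(\xi)|^2 \sum_{k \in \mathbb{N}} |\widehat{\psi}(2^{-k}\xi)|^2 \, d\xi.
\end{equation*}
Since $\widehat{\psi} \in \mathcal{S}(\mathbb{R})$ and $\widehat{\psi}(0) = 0$, one has $|\widehat{\psi}(\xi)| \lesssim \min(|\xi|, |\xi|^{-1})$, so splitting the sum at $k \sim \log_2 |\xi|$ and comparing with two geometric series gives $\sum_k |\widehat{\psi}(2^{-k}\xi)|^2 \lesssim 1$ uniformly in $\xi$, which yields the $L^2$ bound.

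To pass to general $p$, view the $\ell^2$-valued kernel $K(x) \coloneq (\psi_k(x))_{k \in \mathbb{N}}$ and verify the standard Calder\'on--Zygmund hypotheses in this vector-valued setting: the pointwise size bound $\Vert K(x) \Vert_{\ell^2} \lesssim |x|^{-1}$ and the H\"ormander-type regularity condition $\int_{|x| > 2|y|} \Vert K(x - y) - K(x) \Vert_{\ell^2} \, dx \lesssim 1$, uniformly in $y$. Both follow from the Schwartz decay and smoothness of $\psi$ together with the dilation structure $\psi_k = 2^k \psi(2^k \cdot)$: one splits the sum over $k$ into scales $2^{-k} \lesssim |y|$ (where a first-order Taylor estimate in $y$ is used, against rapid Schwartz decay of $\psi'$) and scales $2^{-k} \gtrsim |y|$ (where pointwise decay of each $\psi_k$ and $\psi_k(\cdot - y)$ suffices).

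The main obstacle is precisely this H\"ormander estimate: balancing contributions across all scales in the $\ell^2$-norm (so that the $\ell^2$ cancellation between $\psi_k(x-y)$ and $\psi_k(x)$ is preserved in the integrand) requires a careful split-and-sum argument rather than a naive triangle-inequality-in-$\ell^2$ step, which would be too crude. Once this is in place, the vector-valued Calder\'on--Zygmund theorem combined with the $L^2$ bound yields the desired $L^p$ inequality for every $p \in (1, \infty)$; as an alternative, one could instead randomise with Rademacher functions and use Khintchine's inequality to reduce the square function bound to the boundedness of the scalar multipliers $\sum_k \varepsilon_k \widehat{\psi}(2^{-k} \cdot)$, which can then be handled by the H\"ormander--Mihlin multiplier theorem uniformly in the choice of signs.
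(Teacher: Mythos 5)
Your proposal is correct and follows essentially the same route as the proof the paper cites (Theorem 5.1.2 of Grafakos, \emph{Classical Fourier Analysis}): an $L^2$ bound via Plancherel using $\widehat{\psi}(0)=0$ and Schwartz decay, followed by the vector-valued Calder\'on--Zygmund theorem with the H\"ormander condition verified for the $\ell^2$-valued kernel $(\psi_k)_k$. The paper itself offers no independent proof, so there is nothing further to compare.
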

For a proof of this result, see \cite{grafakosclassical}, pp. 344--345.

The following analogous result for $\| S^p$, $\| p \in (1, \infty)$, will be proved:
\begin{thm} [Littlewood--Paley on $\| S^p$] \label{StepLP}
Let $\| \psi \in \mathcal{S}(\mathbb{R})$ be such that $\| \widehat{\psi}(0) = 0$ and for each $k \in \mathbb{N}$, define $\| \psi_k \coloneq 2^k \psi(2^k \cdot)$ so that $\| \widehat{\psi_k} = \widehat{\psi}(2^{-k} \cdot)$.  Then for all $\| f \in S^p$, $\| p \in (1, \infty)$,
\begin{equation*}
\Big\Vert \Big(\sum_{k \in \mathbb{N}} |f* \psi_k|^2\Big)^{\frac{1}{2}} \Big\Vert_{S^p} \lesssim \Vert f \Vert_{S^p}.
\end{equation*}
\end{thm}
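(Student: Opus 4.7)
The plan is to reduce the statement to the classical Littlewood--Paley theorem on $L^p(\mathbb{R})$ (Theorem \ref{LPR}) via a spatial localization argument.  Fix $x_0 \in \mathbb{R}$; by definition of the Stepanov norm, it suffices to bound $\big\Vert \big(\sum_{k \in \mathbb{N}} |f * \psi_k|^2\big)^{\frac{1}{2}} \big\Vert_{L^p([x_0, x_0+1])} \lesssim \Vert f \Vert_{S^p}$ uniformly in $x_0$.  I would decompose $f = \sum_{j \geq 0} f_j$, where $f_0 \coloneq f \chi_{[x_0-1, x_0+2]}$ captures the \emph{near} part and, for each $j \geq 1$, $f_j \coloneq f \chi_{A_j}$ with $A_j \coloneq [x_0 - 1 - j, x_0 - j] \cup [x_0 + 1 + j, x_0 + 2 + j]$ denotes the \emph{annular} parts at distance of order $j$ from the target interval.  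Each $f_j$ lies in $L^p(\mathbb{R})$ with $\Vert f_j \Vert_{L^p(\mathbb{R})} \lesssim \Vert f \Vert_{S^p}$.

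By Minkowski's inequality applied to the $\ell^2$-valued norm, one has pointwise
\begin{equation*}
\Big(\sum_{k \in \mathbb{N}} |f * \psi_k(s)|^2 \Big)^{\frac{1}{2}} \leq \sum_{j \geq 0} \Big(\sum_{k \in \mathbb{N}} |f_j * \psi_k(s)|^2 \Big)^{\frac{1}{2}},
\end{equation*}
reducing matters to estimating each $j$-term separately.  For $j = 0$, Theorem \ref{LPR} applies directly to $f_0 \in L^p(\mathbb{R})$ to give $\big\Vert \big(\sum_k |f_0 * \psi_k|^2\big)^{\frac{1}{2}} \big\Vert_{L^p(\mathbb{R})} \lesssim \Vert f_0 \Vert_{L^p(\mathbb{R})} \lesssim \Vert f \Vert_{S^p}$, which controls the contribution of the $j = 0$ term on $[x_0, x_0 + 1]$.

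For $j \geq 1$, I would exploit the Schwartz decay $|\psi_k(y)| \lesssim_N 2^k (1 + 2^k |y|)^{-N}$, valid for any $N \in \mathbb{N}$.  For $s \in [x_0, x_0 + 1]$ and $y$ in the support of $f_j$, one has $|s - y| \gtrsim j$, so
\begin{equation*}
|f_j * \psi_k(s)| \lesssim_N \frac{2^k}{(1 + 2^k j)^N} \int_{A_j} |f(y)| \, dy \lesssim_N \frac{2^k}{(1 + 2^k j)^N} \Vert f \Vert_{S^p},
\end{equation*}
where the last step uses H\"older's inequality on intervals of bounded length together with the nesting of Stepanov spaces.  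Since $2^k j \geq 1$ for $j \geq 1$ and $k \geq 0$, the tail sum satisfies $\sum_{k} 2^{2k}/(1 + 2^k j)^{2N} \lesssim_N j^{-2N}$ for any $N > 1$; hence
\begin{equation*}
\Big(\sum_{k \in \mathbb{N}} |f_j * \psi_k(s)|^2 \Big)^{\frac{1}{2}} \lesssim_N j^{-N} \Vert f \Vert_{S^p}.
\end{equation*}
Choosing $N$ with $N > 1$, summing over $j \geq 1$ yields a pointwise bound of $O(\Vert f \Vert_{S^p})$ on $[x_0, x_0 + 1]$, hence also an $L^p$ bound on that interval.

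Combining the $j = 0$ and $j \geq 1$ contributions and taking the supremum over $x_0$ yields the desired Stepanov bound.  The argument is a standard ``local $L^p$ plus decaying tails'' reduction; the only mild technical point is balancing the exponents in the Schwartz decay so that both the sum over $k$ and the subsequent sum over $j$ converge.  I do not anticipate any substantive obstacle beyond this bookkeeping.
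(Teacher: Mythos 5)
Your argument is correct, and it implements the same underlying idea as the paper --- localize to a unit window, handle the nearby mass with the classical $L^p(\mathbb{R})$ Littlewood--Paley theorem, and kill the distant mass with the Schwartz decay of $\psi$ --- but the bookkeeping is organized differently. The paper keeps a fixed grid decomposition $f = \sum_{j \in \mathbb{Z}} f\chi_{[j,j+1)}$ and instead truncates the \emph{kernel} into dyadic annuli $\psi_k^{(l)} = \psi_k \chi_{(-2^l,-2^{l-1}]\cup[2^{l-1},2^l)}$; for each $l$ a support count shows only $O(2^l)$ of the $f_j$ contribute to a given window, H\"older's inequality in $j$ pays a factor $(2^l+4)^{p/p'+1}$, and the decay $|\psi_k^{(l)}| \lesssim_N 2^{-(N-1)k-N(l-1)}$ beats it; the $l=0$ piece is split as $\psi_k = \psi_k^{(0)} + (\psi_k - \psi_k^{(0)})$ with the first part fed to the classical theorem. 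You instead leave the kernel intact and decompose $f$ into annuli \emph{adapted to the target window} $[x_0,x_0+1]$, which lets you bound the far-field contribution by a pointwise (indeed $L^\infty$) estimate $\lesssim_N j^{-N}\Vert f\Vert_{S^p}$ summable in $j$, with no H\"older counting step and no kernel truncation. Your route is arguably leaner for this theorem; the paper's fixed-grid-plus-kernel-truncation scheme has the virtue that it is reused verbatim for the smoothed maximal operator in the proof of the main theorem. Two small points you should make explicit in a final write-up: the interchange $f*\psi_k = \sum_{j\geq 0} f_j * \psi_k$ requires noting that $\int |f(y)||\psi_k(s-y)|\,dy$ converges absolutely (which follows from $\Vert f \Vert_{S^1} < \infty$ and the Schwartz decay), and the sum over $k$ in your tail estimate needs $N \geq 2$ so that $\sum_k 2^{-2k(N-1)}$ converges --- both are routine.
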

\begin{proof}
For each $\| k \in \mathbb{N}$, $\| l \in \mathbb{N}$ and $\| j \in \mathbb{Z}$, write $\| \psi_k^{(l)} \coloneq \psi_k \chi_{(-2^l, -2^{l-1}] \cup [2^{l-1}, 2^l)}$, $\psi_k^{(0)} \coloneq \psi_k \chi_{(-1,1)}$ and $f_j \coloneq f \chi_{[j, j+1)}$.  Then by the triangle inequality,
\begin{eqnarray*}
&& \Big\Vert \Big(\sum_{k \in \mathbb{N}} | f* \psi_k |^2\Big)^{\frac{1}{2}} \Big\Vert_{S^p}\\*
&=& \Big\Vert\Big(\sum_{k \in \mathbb{N}} \Big| \Big(\sum_{l = 0}^{\infty} \Big(\sum_{j \in \mathbb{Z}}f_j\Big) * \psi_k^{(l)}\Big) \Big|^2 \Big)^{\frac{1}{2}} \Big\Vert_{S^p}\\
&\leq& \sum_{l = 0}^{\infty} \Big\Vert \sum_{j \in \mathbb{Z}} \Big(\sum_{k \in \mathbb{N}} \Big|\psi_k^{(l)} * f_j \Big|^2 \Big)^{\frac{1}{2}} \Big\Vert_{S^p}.
\end{eqnarray*}

For fixed $\| x \in \mathbb{R}$ and $\| l \in \mathbb{N} \cup \{ 0 \}$, define
\begin{equation*}
I(x, l) \coloneq \int_x^{x+1} \Big(\sum_{j \in \mathbb{Z}} \Big(\sum_{k \in \mathbb{N}} | \psi_k^{(l)} * f_j |^2 \Big)^{\frac{1}{2}}\Big)^p
\end{equation*}
and note that $\| \Big\Vert \Big(\sum_{k \in \mathbb{N}} | f*\psi_k|^2\Big)^{\frac{1}{2}} \Big\Vert_{S^p} \leq \sum_{l=0}^{\infty} \sup_{x \in \mathbb{R}} (I(x, l))^{\frac{1}{p}}$.  Now, for $\| j \in \mathbb{Z}$ and $l \in \mathbb{N}$, $\| \supp (\psi_k^{(l)} * f_j) \subseteq [-2^l + j, -2^{l-1} + (j + 1)] \cup [2^{l-1} + j, 2^l + (j+1)]$ and $\| \supp (\psi_k^{(0)} * f_j) \subseteq [j - 1, j + 2]$.  As such, for each $\| l \in \mathbb{N}$ and $x \in \mathbb{R}$, define
\begin{eqnarray*}
K_l(x) &\coloneq& \big\{ j \in \mathbb{Z} : \big|\big([-2^l + j, -2^{l-1} + (j + 1)] \cup [2^{l-1} + j, 2^l + (j+1)]\big) \cap [x, x + 1]\big| \neq 0 \big\},\\
K_0(x) &\coloneq& \{ j \in \mathbb{Z} : |[j - 1, j + 2] \cap [x, x + 1]| \neq 0 \},
\end{eqnarray*}
and note that $\| |K_l(x)| \leq 2^l + 4$ for any $\| l \in \mathbb{N} \cup \{0\}$ and $x \in \mathbb{R}$.  Applying H\"older's inequality to the above sum in $j$ and using this fact, it follows that for $\| x \in \mathbb{R}$ and $\| l \in \mathbb{N} \cup \{ 0 \}$,
\begin{eqnarray*}
I(x, l) &\leq& \int_x^{x+1} (2^l + 4)^{\frac{p}{p'}} \sum_{j \in K_l} \Big(\sum_{k \in \mathbb{N}} |\psi_k^{(l)} * f_j|^2\Big)^{\frac{p}{2}}\\*
&\leq& (2^l + 4)^{\frac{p}{p'} + 1} \sup_{j \in K_l} \int_x^{x+1} \Big(\sum_{k \in \mathbb{N}} |\psi_k^{(l)} * f_j |^2 \Big)^{\frac{p}{2}}.
\end{eqnarray*}

This term will be bounded separately for $l = 0$ and $l \in \mathbb{N}$ with the former case making the dominant contribution owing to the Schwartz decay of the function $\psi$.  Indeed, first of all, fixing $l \in \mathbb{N}$, recall that $\| \supp(\psi_k^{(l)}) \subseteq [-2^l, -2^{l-1}] \cup [2^{l-1}, 2^l]$ for each $k \in \mathbb{N}$.  Using the fact that $\| \psi$ is Schwartz, it follows that for each $\| N \in \mathbb{N}$, $k \in \mathbb{N}$ and $x \in \mathbb{R}$,
\begin{equation*}
|\psi_k^{(l)}(x)| \lesssim_N \frac{1}{2^{(N-1)k + N(l-1)}}.
\end{equation*}
Consequently, for $\| x \in \mathbb{R}$ and $\| l \in \mathbb{N}$,
\begin{eqnarray*}
I(x, l) &\lesssim_N& (2^l + 4)^{\frac{p}{p'}+1} \sup_{j \in K_l} \int_x^{x+1} \( \sum_{k \in \mathbb{N}} \Big|
\frac{1}{2^{(N-1)k + N(l-1)}} \int_{\mathbb{R}} f_j \Big|^2\)^{\frac{p}{2}}\\
&\leq& (2^l + 4)^{\frac{p}{p'} + 1} \(\frac{1}{2^{N(l-1)}}\)^p \sup_{j \in K_l} \Vert f \Vert_{L^1([j, j+1])}^p
\(\sum_{k \in \mathbb{N}} \frac{1}{4^{(N-1)k}}\)^{\frac{p}{2}}.
\end{eqnarray*}

Choosing $\| N \geq 2$ to make the sum in $k$ convergent, it follows that
\begin{equation*}
I(x, l) \lesssim_N (2^l + 4)^{\frac{p}{p'} + 1} \(\frac{1}{2^{N(l-1)}}\)^p \Vert f \Vert_{S^1}^p \leq (2^l + 4)^{\frac{p}{p'} + 1} \(\frac{1}{2^{N(l-1)}}\)^p \Vert f \Vert_{S^p}^p.
\end{equation*}
In particular, choosing $N$ to be sufficiently large, it can be seen that
\begin{equation*}
\sum_{l=1}^{\infty} \sup_{x \in \mathbb{R}} (I(x, l))^{\frac{1}{p}} \lesssim \Vert f \Vert_{S^p}.
\end{equation*}

For the case of $\| l = 0$, consider that by the triangle inequality,
\begin{equation*}
(I(x, 0))^{\frac{1}{p}} \lesssim \(\sup_{j \in K_l} \int_{\mathbb{R}} \Big(\sum_{k \in \mathbb{N}} |\psi_k * f_j|^2\Big)^{\frac{p}{2}}\)^{\frac{1}{p}} + \(\sup_{j \in K_l} \int_x^{x+1} \Big(\sum_{k \in \mathbb{N}} |(\psi_k - \psi_k^{(0)}) * f_j|^2\Big)^{\frac{p}{2}}\)^{\frac{1}{p}}.
\end{equation*}
Using Theorem \ref{LPR}, the first term here can be bounded by a constant multiple of $\| \sup_{j \in K_l} \Vert f \Vert_{L^p([j, j+1])}$ which is less than or equal to $\| \Vert f \Vert_{S^p}$.  The second term can be bounded by a constant multiple of
\begin{equation*}
\sum_{l = 1}^{\infty} \(\sup_{j \in K_l} \int_x^{x+1} \Big(\sum_{k \in \mathbb{N}} |\psi_k^{(l)} * f_j |^2\Big)^{\frac{p}{2}}\)^{\frac{1}{p}},
\end{equation*}
which, as before, is bounded by a constant multiple of $\| \Vert f \Vert_{S^p}$.  It is thus the case that
\begin{equation*}
(I(x, 0))^{\frac{1}{p}} \lesssim \Vert f \Vert_{S^p}
\end{equation*}
and so the theorem follows.
\end{proof}

\section{\texorpdfstring{Proof of Theorem \ref{mainthm}}{Proof of Theorem 1.2}} \label{secmainproof}
By a straightforward density argument, to prove Theorem \ref{mainthm}, it suffices to show that
\begin{equation*}
\Vert S^{*}f \Vert_{S^{2^k}} \lesssim_{\alpha_f} \Vert f \Vert_{S^{2^k}}
\end{equation*}
for all $f \in \mathcal{P}$.

To begin with, choose any $\phi \in \mathcal{S}(\mathbb{R})$ such that $\supp(\widehat{\phi}) \subseteq [-1, 1]$, $\widehat{\phi}(\xi) \in [0, 1]$ for all $\xi \in \mathbb{R}$ and $\widehat{\phi}(\xi) = 1$ for $\xi \in [-\frac{1}{2}, \frac{1}{2}]$.  For each $k \in \mathbb{N}$, define $\phi_k = 2^k \phi(2^k \cdot)$, so that $\widehat{\phi_k} = \widehat{\phi}(2^{-k} \cdot)$.  Then define a smoothed version of the operator $S_k$ as
\begin{equation*}
R_k f \coloneq \sum_{n \in \mathbb{Z}} \fhat(\lambda_n) \widehat{\phi_k}(\lambda_n) e^{i\lambda_n \cdot}.
\end{equation*}

The maximal operator $S^{*}$ may now be trivially pointwise bounded as follows:
\begin{equation*}
S^{*}f \leq \sup_{j \in \mathbb{N}} |S_jf - R_jf| + \sup_{j \in \mathbb{N}} |R_j f| \leq \Big(\sum_{j \in \mathbb{N}} |S_j f - R_j f|^2 \Big)^{\frac{1}{2}} + \sup_{j \in \mathbb{N}} |R_j f|.
\end{equation*}
The proof of Theorem \ref{mainthm} has thus been reduced to establishing boundedness of a square function and smoothed maximal operator.  The smoothed maximal operator may be bounded using a similar technique to that used to prove Theorem \ref{StepLP}.  Indeed, for each $\| k \in \mathbb{N}$, $\| l \in \mathbb{N}$ and $\| j \in \mathbb{Z}$, write $\| \phi_k^{(l)} \coloneq \phi_k \chi_{(-2^l, -2^{l-1}] \cup [2^{l-1}, 2^l)}$, $\| \phi_k^{(0)} \coloneq \phi_k \chi_{(-1,1)}$ and $\| f_j \coloneq f \chi_{[j, j+1)}$.  Then by the triangle inequality,
\begin{equation*}
\Vert \sup_{k \in \mathbb{N}} |R_k(f)| \Vert_{S^p} \leq \sum_{l=0}^{\infty} \Big\Vert \sum_{j \in \mathbb{Z}} \sup_{k \in \mathbb{N}} | \phi_k^{(l)} * f_j | \Big\Vert_{S^p}.
\end{equation*}

For fixed $x \in \mathbb{R}$ and $l \in \mathbb{N} \cup \{0\}$, define
\begin{equation*}
I(x, l) \coloneq \int_x^{x+1} \Big( \sum_{j \in \mathbb{Z}} \sup_{k \in \mathbb{N}} | \phi_k^{(l)} * f_j | \Big)^p
\end{equation*}
and note that $\| \Vert \sup_{k \in \mathbb{N}} |R_k(f)| \Vert_{S^p} \leq \sum_{l=0}^{\infty} \sup_{x \in \mathbb{R}} (I(x, l))^{\frac{1}{p}}$.  Now, for $j \in \mathbb{Z}$ and $l \in \mathbb{N}$, $\| \supp (\phi_k^{(l)} * f_j) \subseteq [-2^l + j, -2^{l-1} + (j + 1)] \cup [2^{l-1} + j, 2^l + (j+1)]$ and $\| \supp (\phi_k^{(0)} * f_j) \subseteq [-1 + j, 1 + (j + 1)] = [j - 1, j + 2]$.  As such, for each $\| l \in \mathbb{N}$ and $x \in \mathbb{R}$, define
\begin{eqnarray*}
K_l(x) &\coloneq& \big\{ j \in \mathbb{Z} : \big|\big([-2^l + j, -2^{l-1} + (j + 1)] \cup [2^{l-1} + j, 2^l + (j+1)]\big) \cap [x, x + 1] \big| \neq 0 \big\},\\
K_0(x) &\coloneq& \{ j \in \mathbb{Z} : \left|[j - 1, j + 2] \cap [x, x + 1] \right| \neq 0 \},
\end{eqnarray*}
and note that $\| |K_l(x)| \leqslant 2^l + 4$ for any $\| l \in \mathbb{N} \cup \{0\}$ and $x \in \mathbb{R}$.  Now, applying H\"older's inequality to the above sum in $j$ and using this fact, it follows that for $x \in \mathbb{R}$, $l \in \mathbb{N} \cup \{0\}$,
\begin{eqnarray*}
I(x, l) &\leq& \int^{x+1}_x (2^l + 4)^{\frac{p}{p'}} \sum_{j \in K_l} \sup_{k \in \mathbb{N}} | \phi_k^{(l)} * f_j (s) |^p  \, ds \\*
&\leq& (2^l +4)^{\frac{p}{p'} + 1} \sup_{j \in K_l} \int_x^{x+1} \sup_{k \in \mathbb{N}} | \phi_k^{(l)} * f_j (s) |^p \, ds.
\end{eqnarray*}
As in the proof of Theorem \ref{StepLP}, this term will be bounded separately for $l = 0$ and $l \in \mathbb{N}$.  For $l \in \mathbb{N}$, recall that $\| \supp{\phi_k^{(l)}} \subseteq [-2^l, -2^{l-1}]\cup[2^{l-1}, 2^l]$ for each $k \in \mathbb{N}$, and so by the fact that $\phi$ is Schwartz, it may be deduced that for each $N \in \mathbb{N}$, $k \in \mathbb{N}$ and $x \in \mathbb{R}$,
\begin{equation*}
|\phi_k^{(l)}(x)| \lesssim_N \frac{1}{2^{(N-1)k + N(l-1)}}
\end{equation*}
which is bounded by a constant multiple of
\begin{equation*}
\frac{1}{2^{N(l-1)}}
\end{equation*}
as $\| k \in \mathbb{N}$.  Consequently, for $x \in \mathbb{R}$ and $l \in \mathbb{N}$,
\begin{eqnarray*}
I(x, l) &\lesssim_N& (2^l + 4)^{\frac{p}{p'} + 1} \sup_{j \in K_l} \int_x^{x+1} \(\frac{1}{2^{N(l-1)}} \int_{\mathbb{R}} |f_j(t)| \, dt \)^p \, ds \\*
&\lesssim& \frac{(2^l+4)^{\frac{p}{p'} + 1}}{2^{Np(l-1)}} \Vert f \Vert_{S^p}^p.
\end{eqnarray*}
It follows by selecting sufficiently large $N$ that
\begin{equation*}
\sum_{l=1}^{\infty} \sup_{x \in \mathbb{R}} (I(x, l))^{\frac{1}{p}} \lesssim \Vert f \Vert_{S^p}.
\end{equation*}
For the case of $\| l = 0$, consider that
\begin{eqnarray*}
(I(x, 0))^{\frac{1}{p}} &\lesssim& \(\sup_{j \in K_0} \int_x^{x+1} \sup_{k \in \mathbb{N}} | |\phi_k| * |f_j|(s) |^p \, ds \)^{\frac{1}{p}}\\*
&\leq& \sup_{j \in K_0} \Vert M(f_j)(s) \Vert_{L^p(\mathbb{R})}
\end{eqnarray*}
where $M$ is the Hardy--Littlewood maximal function on $\mathbb{R}$, owing to the fact that $\phi$ is Schwartz.  By boundedness of $M$ on $L^p(\mathbb{R})$, it follows that
\begin{equation*}
\sup_{x \in \mathbb{R}} (I(x, 0))^{\frac{1}{p}} \lesssim \sup_{j \in K_0} \Vert f \Vert_{L^p([j, j+1])} \leq \Vert f \Vert_{S^p}
\end{equation*}
and it can thus be concluded that the smoothed maximal operator is bounded.

To prove Theorem \ref{mainthm}, it remains to establish boundedness of the square function.  First note that for each $j \in \mathbb{N}$ and $\lambda \in \mathbb{R}$,
\begin{equation*}
\widehat{(S_jf)}(\lambda) = \chi_{[-2^j, 2^j]}(\lambda) \fhat(\lambda) = \frac{i}{2}(-i\sgn_{+}(\lambda + 2^j)\widehat{(e^{2\pi i 2^j .}f)}(\lambda + 2^j) + i \sgn_{-}(\lambda - 2^j)\widehat{(e^{-2\pi i 2^j .}f)}(\lambda - 2^j)).
\end{equation*}
Consequently,
\begin{equation*}
S_jf(x) = \frac{i}{2}(e^{-2 \pi i 2^jx}H_{+}(e^{2\pi i 2^j \cdot} f)(x) - e^{2\pi i 2^jx}H_{-}(e^{-2\pi i 2^j \cdot}f)(x))
\end{equation*}
for all $\| x \in \mathbb{R}$.

Define $\| \psi \in \mathcal{S}(\mathbb{R})$ so that $\| \widehat{\psi} = \widehat{\phi}(\tfrac{1}{2} \cdot) - \widehat{\phi}$ and note that $\| \chi_{[-1, 1]} - \widehat{\phi} = \chi_{[-1, 1]} \widehat{\psi}$.  Defining for each $k \in \mathbb{N}$, $\psi_k \coloneq 2^k \psi(2^k \cdot)$ so that $\widehat{\psi_k} = \widehat{\psi}(2^{-k} \cdot)$, it follows by Theorem \ref{bredext}, the above observation and Theorems \ref{stephill2} and \ref{StepLP} that
\begin{eqnarray*}
&& \Vert ( \sum_{j \in \mathbb{N}} | S_j f - R_j f |^2 )^{\frac{1}{2}} \Vert_{S^p}\\*
&=& \Vert (\sum_{j \in \mathbb{N}} | S_j (f * \psi_j) |^2 )^{\frac{1}{2}} \Vert_{S^p}\\
&=& \Vert (\sum_{j \in \mathbb{N}} | e^{-2 \pi i 2^j \cdot} H_{+}(e^{2 \pi i 2^j \cdot} (f * \psi_j)) - e^{2 \pi i 2^j \cdot} H_{-}(e^{-2 \pi i 2^j \cdot} (f * \psi_j)) |^2)^{\frac{1}{2}} \Vert_{S^p}\\
&\lesssim& \Vert ( \sum_{j \in \mathbb{N}} |f * \psi_j|^2)^{\frac{1}{2}} \Vert_{S^p}\\
&\lesssim& \Vert f \Vert_{S^p}
\end{eqnarray*}
and so the proof of Theorem \ref{mainthm} is complete.

\section{Further Remarks} \label{secfurtherremarks}
It is noted that it only remains to generalise the boundedness of the Hilbert transform established in Theorem \ref{stephill2} to $S^p$ for all $p \in (1, \infty)$ to prove Theorem \ref{mainthm} on the same spaces.  The analogous theorem in $L^p$ generalises from the case of $p = 2^k$, $k \in \mathbb{N}$, automatically by interpolation, but a suitable approach to interpolation in the Stepanov setting is not apparent.  Further, there is a certain degree of subtlety concerning the problem of boundedness of the Hilbert transform in the setting of Stepanov norms.  In particular, the Hilbert transform fails to be bounded on the space of all $L^p_{\loc}(\mathbb{R})$ functions with finite $S^p$ norm (which, as was remarked before, is a strictly larger space than $S^p$).  To see this, note that this space can be shown to be equivalent to the ``amalgam'' space $\| (L^p, \ell^{\infty})$, where 
\begin{equation*}
\Vert \cdot \Vert_{(L^p, \ell^{\infty})} \coloneq \( \sup_{x \in \mathbb{Z}} \int_x^{x+1} |\cdot|^p \, \)^{\frac{1}{p}}.
\end{equation*}
As was proved in \cite[Thm. 2.6]{amalgams}, the pre-dual space of $\| (L^p, \ell^{\infty})$ is $\| (L^{p'}, \ell^1)$, which is the space defined with the natural norm, 
\begin{equation*}
\Vert \cdot \Vert_{(L^{p'}, \ell^1)} \coloneq \sum_{n \in \mathbb{Z}} \( \int_n^{n+1} | \cdot |^{p'} \, \)^{\frac{1}{p'}}.
\end{equation*}
From this fact, if the Hilbert transform is bounded on $\| (L^p, \ell^{\infty})$, it is also bounded on $\| (L^{p'}, \ell^1)$.  This can be shown to be false by calculating that $\| H(\chi_{[0,1]})(x) = \frac{1}{\pi} \log \frac{|x|}{|x-1|}$ for all $x \in \mathbb{R}$ and considering the relevant norms.\footnote{The author would like to thank Michael Cowling for suggesting this approach to the Hilbert transform on the Stepanov spaces.}

It should be remarked that boundedness of the Hilbert transform on $S^p$ is claimed in \cite{koizumi2}.  However, the proof there unfortunately seems to contain an error.\footnote{In particular, when bounding the ``$\| J_3$'' term, the triangle inequality is used erroneously on moving from the second line to the third.}  Further, the claimed theorem is the demonstrably false boundedness of the Hilbert transform on the amalgam spaces mentioned above and thus it seems unlikely that the proposed scheme of proof can be repaired.

\renewcommand{\bibname}{Bibliography} 

\bibliographystyle{adb}
\bibliography{apdyadic}

\end{document}